\newtheorem{theorem}{Theorem}
\newtheorem{corollary}[theorem]{Corollary}
\newtheorem{definition}[theorem]{Definition}
\newtheorem{lemma}[theorem]{Lemma}
\newtheorem{remark}[theorem]{Remark}
\newenvironment{proof}[1][Proof]{\noindent\textbf{#1.} }{\ \rule{0.5em}{0.5em}}
\begin{document}

\begin{center}
\bigskip {\huge On the Construction and Malliavin Differentiability of L\'{e}%
vy Noise Driven SDE's with Singular Coefficients}

\bigskip

{\Large Sven Haadem}$^{\text{1}}${\Large \ and Frank Proske}\footnote{%
Department of Mathematics, University of Oslo, Moltke Moes vei 35, Blindern,
P.O. Box 1053, Oslo, 0316, Norway,
\par
Email sven.haadem@cma.uio.no, proske@math.uio.no}

\bigskip

{\Large Abstract}
\end{center}

In this paper we introduce a new technique to construct unique strong
solutions of SDE's with singular coefficients driven by certain L\'{e}vy
processes. Our method which is based on Malliavin calculus does not rely on
a pathwise uniqueness argument. Furthermore, the approach, which provides a
direct construction principle, grants the additional insight that the
obtained solutions are Malliavin differentiable.

\bigskip

\emph{Mathematics Subject Classification} (2010) 60H10 \textperiodcentered\
60H15 \textperiodcentered\ 60H40.

\bigskip

\section{\protect\bigskip Introduction}

Consider the stochastic differential equation (SDE) 
\begin{equation}
X_{t}=x+\int_{0}^{t}b(s,X_{s})ds+L_{t},0\leq t\leq T,x\in \mathbb{R}^{d},
\label{eq:main}
\end{equation}%
where $b:[0,T]\times \mathbb{R}^{d}\longrightarrow \mathbb{R}^{d}$ is a
Borel-measurable function and $L_{t},0\leq t\leq T$ is a $d-$dimensional
(square integrable) L\'{e}vy process, that is a process on some complete
probability space $\left( \Omega ,\mathcal{F},\mu \right) $ with stationary
and independent increments starting in zero (see e.g. \cite{Bertoin}).

\bigskip Using Picard iteration it is well known that there exists a unique
square integrable strong solution $X_{t},0\leq t\leq T$ \ to (\ref{eq:main})
if the drift coefficient $b$ is Lipschitz continuous and of linear growth.
Here, a strong solution to (\ref{eq:main}) means that $X_{t},0\leq t\leq T$
\ is an adapted process with respect to a $\mu -$completed filtration $%
\mathcal{F}_{t},0\leq t\leq T$ generated by $L_{t},0\leq t\leq T$ having c%
\`{a}dl\`{a}g paths and satisfying the equation (\ref{eq:main}) $\mu -$a.e.
See e.g. \cite{Protter}.

\bigskip In this are article, however, we are interested to study strong
solutions to (\ref{eq:main}) for certain L\'{e}vy processes, when $b$ is
singular in the sense that $b$ is bounded and $\alpha -$H\"{o}lder
continuous, i.e. 
\begin{equation*}
\left\Vert b\right\Vert _{C_{b}^{\alpha }}:=\sup_{0\leq t\leq T,x\in \mathbb{%
R}^{d}}\left\vert b(t,x)\right\vert +\sup_{0\leq t\leq T}\sup_{x\neq y}\frac{%
\left\vert b(t,x)-b(t,y)\right\vert }{\left\vert x-y\right\vert ^{\alpha }}%
<\infty .
\end{equation*}%
for some $0<\alpha <1$.

We mention that the analysis of strong solutions of SDE's with singular or
non-Lipschitz coefficients is important and has been of much current
interest for decades in stochastic analysis and its applications. Such
solutions naturally arise e.g. from a variety of applications in the theory
of controlled diffusion processes or in statistical mechanics to model
interacting infinite particle systems. See e.g. \cite{Kleptsyna}, \cite%
{Krylov1}, \cite{KR} and the references therein.

\bigskip The case, when $b$ is singular and $L_{t}$ is a Wiener process, has
been intensively studied in the litterature. A milestone in theory of SDE's
is a result due to A.K. Zvonkin, \cite{Zvonkin}, who constructed unique
strong solutions for Wiener process driven SDE's (\ref{eq:main}) on the real
line, when $b$ is merely bounded and measurable by employing estimates of
solutions of parabolic partial differential equations and a pathwise
uniqueness argument. Using similar techniques the latter result was
subsequently extended to the multidimensional case (\cite{Veretennikov}.
Further important generalizations of those results based on a pathwise
uniqueness argument can be e.g. found in \cite{KR}, \cite{GK} and \cite{GM}.
We also refer to \cite{DFPR}, where the authors use solutions to
infinite-dimensional Kolmogorov equations to prove strong uniqueness of
solutions to (\ref{eq:main}) for Wiener cylindrical processes $L_{t}$ on
Hilbert spaces, when $b$ is bounded and measurable. Another and more direct
approach to obtain strong solutions to (\ref{eq:main}) in the Wiener case,
which doesn't rely on a pathwise uniqueness argument and which is based on
techniques of Malliavin calculus, was studied in \cite{MP},\cite{MMNPZ}. See
also \cite{FNP} in the case of Hilbert spaces.

If the the driving process $L_{t}$ in (\ref{eq:main}), however, is a pure
jump L\'{e}vy process we observe major differences to the Gaussian case. For
example, if $L_{t}$, $0\leq t\leq T$ is a one-dimensional symmetric $\alpha
- $stable process for $0<\alpha <1$ then one can find a bounded $\gamma -$H%
\"{o}lder-continuous drift coefficient $b$ with $\alpha +\gamma <1$ such
that pathwise uniqueness of solutions to (\ref{eq:main}) fails. See \cite%
{TTW}. Similar results on non-pathwise uniqueness of solutions of SDE's with
multiplicative symmetric $\alpha $-stable noise were obtained by \cite{BBZ}.
See also \cite{Bass04}, \cite{Priola12} and the references therein. As for
the study of weak solutions of SDE's driven by L\'{e}vy processes we shall
refer here e.g. to \cite{Bass88}, \cite{Zanzotto02} and \cite{PZ}. Further,
martingale problems of SDE's driven by symmetric $\alpha -$stable processes
were treated in \cite{BZ}.

In this paper we aim at introducing a new technique to construct (unique)
strong solutions to (\ref{eq:main}). We illustrate this principle, which can
be also applied to a variety of other L\'{e}vy processes, by considering the
special case of a truncated $\alpha -$stable process of index $\alpha \in
(1,2)$. Our method differs from the above mentioned ones in the sense that
we do not resort to the Yamada-Watanabe principle to guarantee strong
uniqueness of solutions, that is we do not require pathwise uniqueness in
connection with the existence of a weak solution to find a unique strong
solution to (\ref{eq:main}). In fact our approach, which provides a direct
construction of strong solutions, can be regarded as a synthesis of
techniques developed in \cite{MP}, \cite{MMNPZ} and \cite{FGP} (or \cite%
{Priola12} in the case of symmetric $\alpha -$stable processes) applied to L%
\'{e}vy processes. More precisely, we approximate the singular coefficient $%
b $ in (\ref{eq:main}) by smooth functions $b_{n}$ admitting a unique strong
solution $X_{t}^{n}$ 
\begin{equation}
X_{t}^{n}=x+\int_{0}^{t}b_{n}(s,X_{s}^{n})ds+L_{t},0\leq t\leq T,x\in 
\mathbb{R}^{d}
\end{equation}%
for each $n\geq 1$. Then we recast the integral $%
\int_{0}^{t}b_{n}(s,X_{s}^{n})ds$ in (\ref{eq:main}) by using solutions to a
backward Kolmogorov equation associated with $L_{t}$ in terms of a more
regular expression (see \cite{FGP}, \cite{Priola12}). Finally, we apply a
new compactness criterion of square integrable functionals of L\'{e}vy
processes based on Malliavin calculus to the sequence of solutions $%
X_{t}^{n},n\geq 1$ to obtain a unique strong solution $X_{t}$ (compare \cite%
{MP}, \cite{MMNPZ} in the Wiener process case). Moreover, our method gives
the crucial additional insight that $X_{t}$ is Malliavin differentiable for
all $t$. See \cite{Nua06} or \cite{NOP09} for more information on Malliavin
calculus.

\bigskip

Our paper is organized as follows: In Section 3 we introduce some notation
and recall some basic results from the theory of L\'{e}vy processes and
Malliavin calculus which we will use throughout the article. In Section 3.1
we prove a new compactness criterion for square integrable functionals of L%
\'{e}vy processes and establish certain estimates of solutions of Kolmogorov
type equations associated with L\'{e}vy processes. Finally, in Section 4 we
apply the results of the previous section to prove our main result on the
existence of a unique and Malliavin differentiable strong solution to (\ref%
{eq:main}) for certain L\'{e}vy processes (Theorem \ref{MainResult}).

\section{Framework}

In this section we briefly introduce the mathematical framework we want to
apply in the subsequent sections.

\subsection{H\"{o}lder Spaces}

For $\beta \in (0,1)$ and $k,d\geq 1$, denote by $C_{b}^{\beta }(\mathbb{R}%
^{d},\mathbb{R}^{k})$ the space of bounded $\beta -$H\"{o}lder continuous
functions, that is the space of continuous functions $u:$ $\mathbb{R}%
^{d}\longrightarrow \mathbb{R}^{k}$ such that 
\begin{equation*}
\lVert u\rVert _{C_{b}^{\beta }(\mathbb{R}^{d},\mathbb{R}^{k})}:=\lVert
u\rVert _{\infty }+\underset{x\neq y}{\sup }\frac{\lvert u(x)-u(y)\rvert }{%
\lvert x-y\rvert ^{\beta }}<\infty ,
\end{equation*}%
where $\lVert u\rVert _{\infty }:=\sup_{x\in \mathbb{R}^{d}}\lvert
u(x)\rvert $and $\left\vert \cdot \right\vert $ is the Euclidean norm. We
also simply write $C_{b}^{\beta }(\mathbb{R}^{d})=C_{b}^{\beta }(\mathbb{R}%
^{d},\mathbb{R}).$ Further, we denote by $C_{b}^{i,\beta }(\mathbb{R}^{d})$
for $i\geq 1$ and $0<\alpha <1$ the Banach space of all $i$-times Fr\'{e}%
chet differentiable functions $u:\mathbb{R}^{d}\longrightarrow \mathbb{R}$
with $D^{l}u\in C_{b}^{\beta }(\mathbb{R}^{d},(\mathbb{R}^{d})^{\otimes
(l+1)}),l=1,...,i$ and norm $\left\Vert \cdot \right\Vert _{C_{b}^{i,\beta }(%
\mathbb{R}^{d})}$ given by%
\begin{equation*}
\left\Vert u\right\Vert _{C_{b}^{i,\beta }(\mathbb{R}^{d})}:=\lVert u\rVert
_{\infty }+\sum_{l=1}^{i}\lVert D^{l}u\rVert _{\infty }+\underset{x\neq y}{%
\sup }\frac{\lvert D^{i}u(x)-D^{i}u(y)\rvert }{\lvert x-y\rvert ^{\beta }}.
\end{equation*}%
We let $C_{b}^{0,\beta }(\mathbb{R}^{d}):=C_{b}^{\beta }(\mathbb{R}^{d}).$
For notational convenience we also denote the norm of the Banach space $%
C([0,T],C_{b}^{i,\beta }(\mathbb{R}^{d}))$ by $\left\Vert \cdot \right\Vert
_{C_{b}^{i,\beta }}$ defined as%
\begin{equation*}
\left\Vert u\right\Vert _{C_{b}^{i,\beta }}=\sup_{0\leq t\leq T}\left\Vert
u(t,\cdot )\right\Vert _{C_{b}^{i,\beta }(\mathbb{R}^{d})}.
\end{equation*}

\bigskip

\subsection{L\'{e}vy Processes}

We give a concise summary of basic facts of the theory of L\'{e}vy
processes. The reader may consult \cite{Bertoin} or \cite{NOP09} for further
information.\newline

Given a complete probability space, $(\Omega ,\mathcal{F},P)$, we a L\'{e}vy
process is defined as follows.

\begin{definition}
A stochastic process $L(t)\in \mathbb{R}^{d}$, $t\geq 0$ is called a L\'{e}%
vy process if the following properties hold:

\begin{enumerate}
\item $L(0)=0$ $P$-a.s.,

\item the process has independent increments, that is, for all $t>0$ and $%
h>0 $, the increment $L(t+h)-L(h)$ is independent of $L(s)$ for all $s\leq t$%
,

\item the process has stationary increments, that is, for all $h>0$, the
increment $L(t+h)-L(h)$ has the same law as $L(h)$,

\item the process is stochastically continuous, that is, for every $t>0$ an $%
\epsilon >0$ we have that $\lim_{s\rightarrow t}P\{\lvert L(t)-L(s)\rvert
>\epsilon \}=0$,

\item the paths of the process are c\`adl\`ag, that is, the trajectories are%
\newline
right-continuous with existing left limits.
\end{enumerate}
\end{definition}

\bigskip

Now, define the jump of $L$ at time $t$ as 
\begin{equation*}
\Delta L(t):=L(t)-L(t^{-}).
\end{equation*}%
Let $\mathbb{R}_{0}^{d}:=\mathbb{R}^{d}\setminus \{0\}$ and let $\mathcal{B}(%
\mathbb{R}_{0}^{d})$ be the Borel-$\sigma $-algebra on $\mathbb{R}_{0}^{d}$.
Further, we now introduce a Poisson random measure on $\mathcal{B}([0,\infty
))\times \mathcal{B}(\mathbb{R}_{0}^{d})$ by 
\begin{equation*}
N(t,U):=\sum_{0\leq s\leq t}\mathbf{1}_{U}(\Delta L(s))
\end{equation*}%
for $U\in \mathcal{B}(\mathbb{R}_{0}^{d})$. This is the jump measure of $%
\eta $. The \emph{L\'{e}vy measure} $\nu $ of $\eta $ is defined by 
\begin{equation*}
\nu (U):=E[N(1,U)],
\end{equation*}%
for $U\in \mathcal{B}(\mathbb{R}_{0}^{d}).$

\bigskip It can be shown that the characteristic function of a L\'{e}vy
process is given by the following L\'{e}vy-Khintchine formula (see e.g. \cite%
{Bertoin}):%
\begin{equation}
E[\exp (i\left\langle L(t),u\right\rangle )]=\exp (-t\Psi (u)),u\in \mathbb{R%
}^{d},t\geq 0,  \label{Khintchine}
\end{equation}%
where $\Psi $ is the characteristic exponent%
\begin{equation*}
\Psi (u)=-\int_{\mathbb{R}^{d}}(e^{i\left\langle u,y\right\rangle
}-1-i\left\langle u,y\right\rangle \mathbf{1}_{\left\{ \left\vert
y\right\vert \leq 1\right\} })\nu (dy).
\end{equation*}

\bigskip

Let us define the compensated jump measure $\widetilde{N}$ by 
\begin{equation*}
\tilde{N}(ds,dz):=N(ds,dz)-\nu (dz)dt.
\end{equation*}

It turns out that L\'{e}vy processes have the following representation:

\begin{theorem}[The L\'{e}vy-It\^{o} decomposition]
Let $L$ be a L\'{e}vy process. Then $L$ admits the following integral
representation 
\begin{equation*}
\eta (t)=at+\sigma W(t)+\int_{0}^{t}\int_{|z|<1}z\tilde{N}%
(ds,dz)+\int_{0}^{t}\int_{|z|>1}zN(ds,dz)
\end{equation*}%
for some $a\in \mathbb{R}^{d},\sigma \in \mathbb{R}^{d\times d}\mathbb{\ }$%
and a standard Wiener process $W(t),t\geq 0$.
\end{theorem}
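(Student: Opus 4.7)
The plan is to carry out the standard splitting of $L$ into its "big jump", "small jump", and "continuous" parts, and then identify the continuous part as a Gaussian process by Lévy's characterization.

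First I would isolate the big jumps. Since $\nu(\{|z|>1\}) < \infty$ (a general property of Lévy measures), the process
\[
L^{>1}(t) := \int_0^t\!\!\int_{|z|>1} z\, N(ds,dz) = \sum_{0<s\leq t}\Delta L(s)\,\mathbf{1}_{\{|\Delta L(s)|>1\}}
\]
is a well-defined compound Poisson process, being a finite sum of jumps on any compact interval. Standard properties of the Poisson random measure $N$ immediately give that $L^{>1}$ has independent and stationary increments.

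Next I would construct the small-jump part as an $L^2$-limit. For $0<\varepsilon<1$ define
\[
L^{\varepsilon,1}(t) := \int_0^t\!\!\int_{\varepsilon\leq|z|<1} z\,\widetilde N(ds,dz),
\]
which is a finite sum of jumps minus its compensator. Using the $L^2$-isometry for integrals against $\widetilde N$ and the disjointness of annuli, for $\varepsilon'<\varepsilon$ one gets
\[
E\bigl[|L^{\varepsilon',1}(t)-L^{\varepsilon,1}(t)|^2\bigr] = t\!\int_{\varepsilon'\leq|z|<\varepsilon}\!|z|^2\,\nu(dz),
\]
which tends to $0$ because $\int_{|z|\leq 1}|z|^2\,\nu(dz)<\infty$ (another general property of Lévy measures, derivable from the Lévy–Khintchine formula applied to the process). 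A Doob-type maximal inequality (or passage to a càdlàg version via Kolmogorov's inequality for martingales) upgrades this to uniform convergence in probability on compacts, producing a càdlàg process $L^{<1}(t)=\int_0^t\!\!\int_{|z|<1} z\,\widetilde N(ds,dz)$ with stationary independent increments.

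Now set $C(t) := L(t) - L^{>1}(t) - L^{<1}(t) - at$, where $a$ is chosen so that the centering absorbs any drift coming from jumps of size $\geq 1$. By construction $C$ is a Lévy process; moreover the jumps of $L^{>1}+L^{<1}$ exhaust the jumps of $L$, so $C$ has continuous paths. The main obstacle is then the classical step of showing that a continuous Lévy process is necessarily Gaussian: this I would do via the Lévy–Khintchine formula (\ref{Khintchine}) applied to $C$, observing that continuity forces the Lévy measure of $C$ to vanish, leaving a characteristic exponent of the form $\frac12 u^\top\Sigma u - i\langle b,u\rangle$; absorbing the linear term into $a$ and writing $\Sigma=\sigma\sigma^\top$ identifies $C(t)=\sigma W(t)$ for a standard Brownian motion $W$. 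Independence of $W$ from $L^{>1}$ and $L^{<1}$ follows because these are built from $N$ restricted to disjoint sets (and $C$ is orthogonal to all jump martingales). Piecing together yields the claimed decomposition, with the hardest technical point being the rigorous identification of the continuous remainder as $\sigma W(t)$.
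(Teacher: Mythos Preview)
Your sketch is the standard textbook argument and is essentially correct, but there is nothing to compare it to: the paper does not prove the L\'evy--It\^o decomposition. It is stated in Section~2.2 as background material (``It turns out that L\'evy processes have the following representation''), with the implicit reference being \cite{Bertoin} or \cite{NOP09}. So your proposal is not wrong, but it is supplying a proof where the paper deliberately omits one.

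On the content of your sketch: the three-step splitting (big jumps as compound Poisson, small jumps as an $L^2$-limit of compensated sums, continuous remainder identified as Gaussian) is exactly the classical route. One minor caution: invoking the L\'evy--Khintchine formula to show that the continuous remainder has vanishing L\'evy measure is slightly circular in some presentations, since L\'evy--Khintchine is often derived from L\'evy--It\^o rather than the other way around. A cleaner alternative at that step is to argue directly that a continuous process with stationary independent increments has Gaussian marginals (e.g.\ via a Lindeberg-type argument on a fine partition, or by showing the characteristic function satisfies a differential relation forcing quadratic exponent). But as a sketch of the standard proof your outline is fine.
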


\bigskip

Let us recall the infinitesimal generator $\mathcal{L}$ of the L\'{e}vy
processes $L_{t},t\geq 0$:\newline
The infinitesimal generator of $L_{t},t\geq 0$ is the operator $\mathcal{L}$%
, which is defined to act on suitable functions $f$ of some Banach space
such that 
\begin{equation*}
\mathcal{L}f(x)=\lim_{t\rightarrow 0^{+}}\frac{E^{x}[f(L_{t})]-f(x)}{t}
\end{equation*}%
exists.

\subsection{Chaos Expansions and the Malliavin Derivative}

In this subsection we briefly recall the concept of the Malliavin derivative
with respect to L\'{e}vy processes as a central notion of Malliavin
calculus. We refer the reader to the books \cite{Nua06} and \cite{NOP09} for
more information on Malliavin calculus.

For notational convenience, we assume in this subsection $d=1$. Consider $%
\Omega =\mathcal{S}(\mathbb{R}^{d})=\mathcal{S}(\mathbb{R})$, the space of
tempered distributions on $\mathbb{R}$. Then we know from the
Bochner-Minlos-Sazonov theorem (see e.g. \cite{Xiong}) that there exists a
probablility measure $\mu $, such that 
\begin{equation*}
\int_{\Omega }e^{i\langle \omega ,f\rangle }\mu (d\omega )=exp(\int_{\mathbb{%
R}}\Psi (f(x))dx),
\end{equation*}%
for $f\in \mathcal{S}(\mathbb{R})$, where $\Psi $ is the characteristic
exponent given by 
\begin{equation*}
\Psi (u)=\int_{\mathbb{R}}(e^{iuz}-1-iuz)\nu (dz),
\end{equation*}%
where $<\omega ,f>$ denotes the action of $\omega \in \mathcal{S}^{^{\prime
}}(\mathbb{R})$ (Schwartz distribution space) on $f\in \mathcal{S}(\mathbb{R}%
)$ and where $\nu $ is a L\'{e}vy measure. The triple $(\Omega ,\mathcal{F}%
,\mu )$ is called the (pure jump) L\'{e}vy white noise probability space.

From now on we assume a square integrable L\'{e}vy process $L_{t},t\geq 0$
with L\'{e}vy measure $\nu $ constructed on $(\Omega ,\mathcal{F},\mu )$.

In what follows we want to use the chaos representation property of a square
integrable L\'{e}vy process to define the Malliavin derivative with respect
to such processes. To this end we need some notation:

Let us denote by $\mathcal{I}$ the set of all finite multi-indices $\alpha
=(\alpha _{1},\alpha _{2},\ldots ,\alpha _{m})$, $m\in \mathbb{N}_{0}$ of
non-negative integers $a_{i}$, $i=1,\ldots ,m$, and define $\lvert \alpha
\rvert :=\alpha _{1}+\ldots \alpha _{m}$. Further, let $e_{i}$, $i\geq 1$ be
an orthonormal basis of $L^{2}(\lambda \times \nu )$ ($\lambda $ Lebesgue
measure) and let for $\alpha =(\alpha _{1},\alpha _{2},\ldots ,\alpha
_{m})\in \mathcal{I}$ 
\begin{align*}
H_{\alpha }& =\int_{\mathbb{R}}\int_{\mathbb{R}_{0}}\cdots \int_{\mathbb{R}%
}\int_{\mathbb{R}_{0}}e_{1}^{\otimes \alpha _{1}}\hat{\otimes}\cdots \hat{%
\otimes}e_{m}^{\otimes \alpha _{m}}((s_{1},z_{1}),\ldots ,(s_{m},z_{m})) \\
& \widetilde{N}(ds_{1},dz_{1})\cdots \widetilde{N}(ds_{m},dz_{m}),
\end{align*}%
where $\otimes $ and $\hat{\otimes}$ denotes the tensor product and the
symmetrized tensor product, respectively.

Then $\{H_{\alpha }:\alpha \in \mathcal{I}\}$ forms an orthogonal basis of $%
L^{2}(\mu )$:

\begin{theorem}[Chaos expansion]
Any $X \in L^2(\mu)$ has the unique chaos decomposition of the form 
\begin{align}  \label{eq:decomp}
X = \sum_{\alpha \in \mathcal{I}} c_{\alpha} H_{\alpha}
\end{align}
with $c_{\alpha} \in \mathbb{R}$. Moreover 
\begin{align*}
\lVert X \rVert^2_{L^2(\mu)} = \sum_{\alpha \in \mathcal{I}} \alpha!
c_{\alpha},
\end{align*}
where 
\begin{align*}
\alpha ! := \alpha_1!,\alpha_2!,\ldots,\alpha_m!
\end{align*}
for $\alpha= (\alpha_1, \alpha_2,\ldots,\alpha_m)$.
\end{theorem}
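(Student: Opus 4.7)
The plan is to deduce the multi-index chaos expansion from Itô's iterated integral representation for square integrable functionals of the compensated Poisson random measure $\widetilde{N}$, followed by expansion of each order-$n$ chaos in a product basis built from $\{e_i\}_{i\geq 1}$.

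First I would set up, for each $n\geq 1$ and symmetric $f\in L^2((\lambda\times \nu)^{\otimes n})$, the $n$-fold iterated integral
\begin{equation*}
I_n(f) = n! \int_{0}^{\infty}\int_{\mathbb{R}_0}\!\!\cdots\!\!\int_{0}^{s_{n-1}}\int_{\mathbb{R}_0} f((s_1,z_1),\ldots,(s_n,z_n))\, \widetilde N(ds_n,dz_n)\cdots \widetilde N(ds_1,dz_1),
\end{equation*}
and verify by induction, using the standard Itô isometry for integrals against $\widetilde N$, the isometry $\|I_n(f)\|_{L^2(\mu)}^2 = n!\|f\|_{L^2((\lambda\times \nu)^n)}^2$ together with $E[I_n(f)I_m(g)] = 0$ for $n\neq m$. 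This gives an orthogonal decomposition $L^2(\mu) \supseteq \bigoplus_{n\geq 0}\mathcal{H}_n$, where $\mathcal{H}_n := \{I_n(f) : f\in L^2_{\mathrm{sym}}\}$ and $\mathcal{H}_0 := \mathbb{R}$.

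Next I would establish the crucial completeness step: $L^2(\mu) = \bigoplus_{n\geq 0}\mathcal{H}_n$. The standard route is to show that the linear span of Doléans-type exponentials $\mathcal{E}(f) = \exp\bigl(\int f\, d\widetilde N - \int (e^{f}-1-f)\,d(\lambda\otimes\nu)\bigr)$ for $f$ in a total set of $L^2(\lambda\otimes\nu)$ is dense in $L^2(\mu)$ (using the Bochner–Minlos construction of $\mu$ in the excerpt, this reduces to separating points via characteristic functionals), and then to expand $\mathcal{E}(f)$ as a convergent sum $\sum_n \tfrac{1}{n!} I_n(f^{\otimes n})$, placing $\mathcal{E}(f)$ in the closure of $\bigoplus_n \mathcal{H}_n$. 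This is the main obstacle, since it is where one must invoke a nontrivial density argument tailored to the pure-jump Lévy white noise space; everything else is algebra and isometry.

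Finally, having the Wiener–Itô decomposition, within each $\mathcal{H}_n$ I would fix the orthonormal basis $\{e_i\}$ of $L^2(\lambda\times \nu)$ and expand a symmetric $f\in L^2_{\mathrm{sym}}((\lambda\times\nu)^n)$ as
\begin{equation*}
f = \sum_{|\alpha|=n} c_\alpha\, e_1^{\hat\otimes \alpha_1}\hat\otimes \cdots \hat\otimes e_m^{\hat\otimes \alpha_m},
\end{equation*}
where the symmetrized tensor products, suitably normalized, form an orthonormal basis of the symmetric tensor space. Applying $I_n$ term-by-term (noting that $I_n$ of the symmetrized tensor above is exactly $H_\alpha$ up to the combinatorial factor absorbed into $c_\alpha$) gives $X = \sum_{\alpha\in\mathcal{I}} c_\alpha H_\alpha$ with uniqueness of the coefficients inherited from the orthogonality of the $H_\alpha$. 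Combining the orthogonality $\|H_\alpha\|_{L^2(\mu)}^2 = \alpha!$ with the Pythagorean identity across chaoses yields the norm formula, completing the proof.
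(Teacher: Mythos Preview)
The paper does not supply a proof of this theorem at all: it is stated as a standard background result (the sentence preceding it says that $\{H_\alpha:\alpha\in\mathcal I\}$ forms an orthogonal basis of $L^2(\mu)$, and the theorem is then recorded without argument, with the reader implicitly referred to \cite{NOP09}). So there is no ``paper's own proof'' to compare against.

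That said, your sketch is the standard route and is essentially correct: establish the It\^o isometry and orthogonality for the iterated integrals $I_n$, prove completeness by showing that Dol\'eans exponentials span a dense subspace and expand as $\sum_n \tfrac{1}{n!} I_n(f^{\otimes n})$, and then pass from the graded Wiener--It\^o decomposition to the multi-index form by expanding each symmetric kernel in the symmetrized tensor basis built from $\{e_i\}$. This is exactly the argument given in \cite{NOP09}, which the paper cites for this material. The only step you rightly flag as nontrivial is the density of the exponential vectors; on the L\'evy white noise space constructed via Bochner--Minlos this is handled by a characteristic-functional argument, and you have identified the correct mechanism. The norm identity then follows from $\lVert H_\alpha\rVert_{L^2(\mu)}^2=\alpha!$ together with orthogonality, as you say (note the paper's displayed formula is missing a square on $c_\alpha$).
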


\bigskip

We are now ready to define the Malliavin derivative.

We define the Malliavin derivative of a square integrable functional $X$ of
a pure jump L\'{e}vy process $L$ with chaos expansion 
\begin{equation}
X=\sum_{\alpha \in \mathcal{I}}c_{\alpha }H_{\alpha }  \label{ChaosExpansion}
\end{equation}%
by 
\begin{equation*}
D_{t,z}X=\sum_{\beta \in \mathcal{I}}\sum_{i\in \mathbb{N}}(c_{\beta
+\epsilon _{i}}(\beta _{i}+1))e_{i}(t,z)H_{\beta },
\end{equation*}%
provided $X$ belongs to the domain $\mathbb{D}^{1,2}\subset L^{2}(\mu )$
given by%
\begin{eqnarray*}
\mathbb{D}^{1,2} &:&=\left\{ X\in L^{2}(\mu )\text{ with chaos expansion (%
\ref{ChaosExpansion}): }\right. \\
&&\left. \sum_{\beta \in \mathcal{I}}\sum_{i\in \mathbb{N}}(c_{\beta
+\epsilon _{i}}(\beta _{i}+1))^{2}\beta !<\infty \right\} ,
\end{eqnarray*}%
where $\epsilon _{i}:=(0,0,...,0,1,0,...,0)$ with $1$ in the $i-$th position.

\bigskip

\subsection{Fractional Sobolev Spaces}

In this paper we aim at constructing strong solutions to L\'{e}vy noise
driven SDE's by using Banach spaces of functions related to fractional
Sobolev spaces (or Sobolev-Slobodeckij spaces). See \cite{AF03} for more
information about these spaces.

\begin{definition}
Let $0<\alpha <2,1\leq p<\infty $ and let $\Omega \subset \mathbb{R}^{d}$ be
a Lipschitz domain. Then, the fractional Sobolev space $W^{\alpha ,p}(\Omega
)$ can be defined as%
\begin{eqnarray*}
&&W^{\alpha ,p}(\Omega ) \\
&=&\left\{ f:\Omega \longrightarrow \mathbb{R}:\left\Vert f\right\Vert
_{W^{\alpha ,p}(\Omega )}:=\right. \\
&&\left. (\left\Vert f\right\Vert _{L^{p}(\Omega )}^{p}+\int_{\Omega
}\int_{\Omega }\frac{\left\vert f(x)-f(y)\right\vert ^{p}}{\left\vert
x-y\right\vert ^{d+2\alpha }}dxdy)^{1/p}<\infty \right\} .
\end{eqnarray*}
Here, 
\begin{equation*}
\left[ f\right] :=(\int_{\Omega }\int_{\Omega }\frac{\left\vert
f(x)-f(y)\right\vert ^{p}}{\left\vert x-y\right\vert ^{d+2\alpha }}%
dxdy)^{1/p}
\end{equation*}%
denotes the Slobodeckij semi-norm.
\end{definition}

\bigskip The Sobolev-Slobodeckij spaces form a scale of Banach spaces, i.e.
one has the continuous injections or embeddings 
\begin{equation*}
W^{k+1,p}(\Omega )\hookrightarrow W^{s^{\prime },p}(\Omega )\hookrightarrow
W^{s,p}(\Omega )\hookrightarrow W^{k,p}(\Omega ),\quad k\leq s\leq s^{\prime
}\leq k+1.
\end{equation*}%
Sobolev-Slobodeckij spaces are special cases of Besov spaces. See e.g. \cite%
{AF03}.

Another approach to define fractional order Sobolev spaces $W^{\alpha
,p}(\Omega )$ is

\begin{definition}
\begin{equation*}
W^{\alpha ,p}(\Omega ):=\{f\in L^{p}(\Omega ):\mathcal{F}^{-1}(1+|\xi
|^{2})^{\frac{\alpha }{2}}\mathcal{F}f\in L^{p}(\Omega )\}
\end{equation*}%
with the norm 
\begin{equation*}
\Vert f\Vert _{W^{k,p}}:=\Vert \mathcal{F}^{-1}(1+|\xi |^{2})^{\frac{k}{2}}%
\mathcal{F}f\Vert _{L^{p}},
\end{equation*}%
where $\mathcal{F}$ denotes the Fourier-transform. This space is also called
a Bessel potential space. $\Omega $ is a domain with uniform $C^{k}$%
-boundary, $k$ a natural number and $1<p<\infty $.
\end{definition}

By the embeddings 
\begin{equation*}
W^{k+1,p}(\mathbb{R}^{n})\hookrightarrow W^{s^{\prime },p}(\mathbb{R}%
^{n})\hookrightarrow W^{s,p}(\mathbb{R}^{n})\hookrightarrow W^{k,p}(\mathbb{R%
}^{n}),\quad k\leq s\leq s^{\prime }\leq k+1
\end{equation*}%
the Bessel potential spaces form a continuous scale between these Sobolev
spaces.

\section{\protect\bigskip Preliminary Results}

\QTP{bTë?}
In this section we give a new compactness criterion for square integrable
functionals (of pure jump) L\'{e}vy processes based on Malliavin calculus.
Further, we prove some regularity results of solutions of Kolmogorov type
equations associated with certain L\'{e}vy processes. We aim at employing
these results in Section 4 to establish our main results on the existence
and uniqueness of Malliavin differentiable strong solutions to SDEs of the
form (\ref{eq:main}).

\subsection{Compactness Criterion}

\QTP{bTë?}
Our construction method of solutions to (\ref{eq:main}) requires a
compactness criterion for subsets of $L^{2}(\mu )$. So we prove the
following theorem which can be regarded as an extension of \cite{PMN} from
Wiener processes to (pure jump) L\'{e}vy processes.

\begin{theorem}[Compactness in $L^{2}(\protect\mu )$]
\label{Theorem 3} Let $C$ be a selfadjoint compact operator on $H\otimes
L^{2}(\nu )$ with dense image, where $H:=L^{2}([0,1])$. Then for any $c>0$
the set 
\begin{equation*}
\mathcal{G}=\{G\in \mathbb{D}^{1,2}:\lVert G\rVert _{L^{2}(\Omega )}+\lVert
C^{-1}DG\rVert _{L^{2}(\Omega ;H\otimes L^{2}(\nu ))}\leq c\}
\end{equation*}%
is relatively compact in $L^{2}(\mu )$.
\end{theorem}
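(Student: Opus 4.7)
The plan is to adapt the Da Prato--Malliavin--Nualart compactness criterion \cite{PMN} from the Wiener setting to the pure-jump L\'{e}vy chaos introduced in Section 2.3. Let $P_{N}G := \sum_{|\alpha| \leq N} c_{\alpha} H_{\alpha}$ denote the orthogonal projection of $G \in L^{2}(\mu)$ onto chaos of order at most $N$; since $P_{N}$ is a contraction on $L^{2}(\mu)$, the problem reduces to showing, first, that $\sup_{G \in \mathcal{G}} \lVert G - P_{N}G \rVert_{L^{2}(\mu)} \to 0$ as $N \to \infty$, and second, that $P_{N}\mathcal{G}$ is relatively compact in $L^{2}(\mu)$ for each fixed $N$. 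A standard $\varepsilon/2$ argument then yields that $\mathcal{G}$ is totally bounded, hence relatively compact in the complete space $L^{2}(\mu)$.

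For the tail estimate I would exploit the boundedness of $C$: setting $M := \lVert C \rVert_{\mathrm{op}}$, the identity $DG = C(C^{-1}DG)$ gives $\lVert DG \rVert_{L^{2}(\Omega ; H \otimes L^{2}(\nu))} \leq Mc$. A direct computation with the chaos expansion and the formula for $D$ from Section 2.3 yields the number-operator identity
\begin{equation*}
\lVert DG \rVert_{L^{2}(\Omega ; H \otimes L^{2}(\nu))}^{2} = \sum_{n \geq 1} n \lVert J_{n}G \rVert_{L^{2}(\mu)}^{2},
\end{equation*}
where $J_{n}$ is the projection onto the $n$-th chaos. Combining the two bounds gives $\sum_{n > N} \lVert J_{n}G \rVert_{L^{2}(\mu)}^{2} \leq M^{2}c^{2}/(N+1)$ uniformly over $G \in \mathcal{G}$, which is the required tail estimate.

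For the finite-level compactness I would identify $J_{n}G$ with its symmetric kernel $f_{n} \in (H \otimes L^{2}(\nu))^{\hat{\otimes} n}$ via the iterated-integral isomorphism and use that the Malliavin derivative acts as $D_{t,z} I_{n}(f_{n}) = n I_{n-1}(f_{n}(\cdot,(t,z)))$ in the last variable. The contribution of the $n$-th chaos to $\lVert C^{-1}DG \rVert^{2}$ is $n \cdot n! \lVert (I^{\otimes (n-1)} \otimes C^{-1}) f_{n} \rVert^{2}$, so each kernel $f_{n}$ lies in the image of $I^{\otimes (n-1)} \otimes C$ applied to a bounded ball in $(H \otimes L^{2}(\nu))^{\otimes n}$. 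Since $C$ is compact, $I^{\otimes (n-1)} \otimes C$ is compact on the Hilbertian tensor product; together with the continuity of symmetrization and of $I_{n} : (H \otimes L^{2}(\nu))^{\hat{\otimes} n} \to L^{2}(\mu)$, this yields relative compactness of $J_{n}\mathcal{G}$, and summing over $n \leq N$ gives the required statement. The main obstacle is the technical bookkeeping with multiple-integral kernels and symmetrization, together with the precise identification of the $H \otimes L^{2}(\nu)$ coordinate with the pair $(s,z) \in [0,T] \times \mathbb{R}_{0}^{d}$ against which $\widetilde{N}(ds,dz)$ integrates; this is what distinguishes the L\'{e}vy case from the Wiener case of \cite{PMN}, but once these identifications are set up correctly the rest of the argument mirrors the Wiener proof.
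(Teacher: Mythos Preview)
Your two-stage strategy --- a uniform chaos--tail estimate followed by compactness at each fixed chaos level --- is natural, and the tail estimate in Step~2 is correct. The gap is in Step~3: the operator $I^{\otimes(n-1)}\otimes C$ is \emph{not} compact on $(H\otimes L^{2}(\nu))^{\otimes n}$ once $n\geq 2$. A Hilbert--space tensor product $A\otimes B$ of bounded operators is compact only if both factors are; concretely, if $(e_{k})_{k\geq 1}$ is an orthonormal basis of $H\otimes L^{2}(\nu)$ and $Cv\neq 0$, the images $(I\otimes C)(e_{k}\otimes v)=e_{k}\otimes Cv$ are orthogonal with fixed positive norm, so no subsequence converges. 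Hence knowing that $f_{n}$ lies in the image of $I^{\otimes(n-1)}\otimes C$ applied to a bounded ball does not give relative compactness of $J_{n}\mathcal{G}$. Since your tail bound uses only $\lVert DG\rVert\leq \lVert C\rVert_{\mathrm{op}}\,c$ and never the compactness of $C$, all the compactness has to be extracted in Step~3, and this is exactly where the argument breaks.

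What rescues the situation is that $f_{n}$ is \emph{symmetric}: the bound on $(I^{\otimes(n-1)}\otimes C^{-1})f_{n}$ in fact controls $C^{-1}$ acting on every tensor slot, not just the last. Making this precise forces you to diagonalise $C$ and count multi-indices --- and that is the paper's proof. Choosing an orthonormal eigenbasis $Ce_{i}=\beta_{i}e_{i}$ with $\beta_{i}\to 0$, one computes
\[
\lVert C^{-1}DG\rVert_{L^{2}(\Omega;H\otimes L^{2}(\nu))}^{2}
=\sum_{\gamma\in\mathcal{I}}\gamma!\,c_{\gamma}^{2}\sum_{k}\beta_{k}^{-2}\gamma_{k},
\]
and observes that for each $R>0$ the set $A_{R}=\{\gamma\in\mathcal{I}:\sum_{k}\beta_{k}^{-2}\gamma_{k}<R\}$ is \emph{finite} (because $\beta_{k}\to 0$). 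The projection onto $\mathrm{span}\{H_{\gamma}:\gamma\in A_{R}\}$ then plays the role of your $P_{N}$, with the decisive difference that its range is finite-dimensional rather than the infinite-dimensional sum of the first $N$ chaoses; the complementary piece is uniformly small by the displayed identity. The paper thus performs a single finite-dimensional truncation indexed by the spectral weight $\sum_{k}\beta_{k}^{-2}\gamma_{k}$, bypassing your Step~3 entirely.
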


\begin{proof}
The proof is similar to that of Theorem 1 in \cite{PMN}. Consider a complete
orthonormal system $\{e_{i}\}_{i\geq 1}$ of $H\otimes L^{2}(\nu )$. Assume
that $Ce_{i}=\beta _{i}e_{i}$ with $\beta _{i}>0$ for all $i\geq 1$. Note
that the compactness of $C$ implies that $\lim_{i\rightarrow \infty }\beta
_{i}=0$. Let $G\in \mathbb{D}^{1,2}$ be a random variable such that 
\begin{equation*}
\lVert G\rVert _{L^{2}(\Omega )}+\lVert C^{-1}DG\rVert _{L^{2}(\Omega
;H\otimes L^{2}(\nu ))}\leq c.
\end{equation*}%
Let 
\begin{equation*}
G=\sum_{\gamma \in \mathcal{I}}c_{\gamma }H_{\gamma }
\end{equation*}%
be the chaos decomposition of $G$. Then 
\begin{equation*}
D_{\cdot ,\cdot }G=\sum_{\gamma \in \mathcal{I}}\left( \sum_{k}c_{\gamma
+\epsilon ^{k}}(\gamma _{k}+1)e_{k}(\cdot ,\cdot )\right) H_{\gamma },
\end{equation*}%
where $\epsilon ^{k}\in \mathcal{I}$ is defined by 
\begin{equation*}
\epsilon ^{j}=%
\begin{cases}
1, & \text{ if }\epsilon _{j}^{j}=1 \\ 
0, & \text{otherwise.}%
\end{cases}%
\end{equation*}%
See Section 2.3. From this we get that 
\begin{equation*}
\lVert G\rVert _{L^{2}(\Omega )}^{2}=\sum_{\alpha }\alpha !c_{\alpha }^{2}
\end{equation*}%
and 
\begin{align*}
\lVert C^{-1}D_{\cdot ,\cdot }G\rVert _{L^{2}(\Omega ;H\otimes L^{2}(\nu
))}^{2}& =\sum_{\gamma }\gamma !\sum_{k}(\gamma _{k}+1)^{2}\frac{1}{\beta
_{k}^{2}}c_{\gamma +\epsilon ^{k}}^{2} \\
& =\sum_{\gamma }\sum_{k}(\gamma -\epsilon ^{k})!c_{\gamma }^{2}\frac{1}{%
\beta _{k}^{2}}\gamma _{k}^{2} \\
& =\sum_{\gamma }c_{\gamma }^{2}\gamma !\sum_{k}\frac{1}{\beta _{k}^{2}}%
\gamma _{k}^{2}\frac{(\gamma -\epsilon ^{k})!}{\gamma !} \\
& =\sum_{\gamma }c_{\gamma }^{2}\gamma !\sum_{k}\frac{1}{\beta _{k}^{2}}%
\gamma _{k}^{2}.
\end{align*}%
For fixed $R>0$ define the set 
\begin{equation*}
A_{R}=\left\{ \alpha \in \mathcal{I}:\sum_{k}\frac{1}{\beta _{k}^{2}}\alpha
_{k}<R\right\} .
\end{equation*}%
Since $\underset{i\rightarrow \infty }{\lim }\beta _{i}=0$ and $\alpha
_{i}\in \mathbb{N}_{0}$, $i\geq 1$ for $\alpha \in \mathcal{I}$ we see that
the set $A_{R}$ only has finitely many elements. On the other hand we obtain 
\begin{align}
\lVert G\rVert _{L^{2}(\Omega )}^{2}& =\sum_{\alpha }\alpha !c_{\alpha
}^{2}=\sum_{\alpha \notin A_{R}}\alpha !c_{\alpha }^{2}+\sum_{\alpha \in
A_{R}}\alpha !c_{\alpha }^{2}  \label{eq:norm} \\
& =\frac{R}{R}\sum_{\alpha \notin A_{R}}\alpha !c_{\alpha }^{2}+\sum_{\alpha
\in A_{R}}\alpha !c_{\alpha }^{2}  \notag \\
& \leq \frac{1}{R}\sum_{\alpha \notin A_{R}}\alpha !c_{\alpha }^{2}\sum_{k}%
\frac{1}{\beta _{k}^{2}}\alpha _{k}+\sum_{\alpha \in A_{R}}\alpha !c_{\alpha
}^{2}  \notag \\
& \leq \frac{1}{R}\sum_{\alpha }\alpha !c_{\alpha }^{2}\sum_{k}\frac{1}{%
\beta _{k}^{2}}\alpha _{k}+\sum_{\alpha \in A_{R}}\alpha !c_{\alpha }^{2} 
\notag \\
& =\frac{1}{R}\lVert C^{-1}D_{\cdot ,\cdot }G\rVert _{L^{2}(\Omega ;H\otimes
L^{2}(\nu ))}^{2}+\sum_{\alpha \in A_{R}}\alpha !c_{\alpha }^{2}.  \notag
\end{align}%
Let $\epsilon >0$. Since $A_{R}$ is finite we can find $C_{\alpha }^{j}$, $%
\alpha \in A_{R}$, $1\leq j\leq n(R,\epsilon )$, such that for all $G\in 
\mathcal{G}$ 
\begin{equation*}
\inf_{j}\left\{ \sum_{\alpha \in A_{R}}\alpha !\lvert c_{\alpha }-c_{\alpha
}^{j}\rvert ^{2}\right\} <\frac{\epsilon }{2}.
\end{equation*}%
Define 
\begin{equation*}
G^{j}:=\sum_{\alpha \in A_{R}}c_{\alpha }^{j}H_{\alpha },
\end{equation*}%
and replace $G$ in (\ref{eq:norm}) by $G-G^{j}$. Then we see that 
\begin{equation*}
\inf_{j}\lVert G-G^{j}\rVert _{L^{2}(\Omega )}^{2}\leq \frac{1}{R}\lVert
C^{-1}D_{\cdot ,\cdot }G\rVert _{L^{2}(\Omega ;H\otimes L^{2}(\nu ))}^{2}+%
\frac{\epsilon }{2}\leq \epsilon ,
\end{equation*}%
$j=1,\ldots ,n(R,\epsilon )$ for $R\geq 2\frac{c%
{{}^2}%
}{\epsilon }$. So the $L^{2}(\Omega )$-balls with center $G^{j}$, $%
j=1,\ldots ,n(R,\epsilon )$ and radius $\epsilon $ cover $\mathcal{G}$.
\end{proof}

\bigskip

\subsection{Some Regularity Results}

\subsubsection{Kolmogorov Type Equations Associated with L\'{e}vy Processes}

In this subsection we want to prove some regularity results for Kolmogorov
type equations associated with certain L\'{e}vy processes. The latter
results will be used to recast the drift term $\int_{0}^{t}b(s,X_{s})ds$ in
the SDE (\ref{eq:main}) in terms of a more regular expression which enables
us to compute certain estimates with respect to the Malliavin derivative of
approximating solutions to $X_{\cdot }$ (see Section 4).

\bigskip

We need the following lemma:

\begin{lemma}
\label{Lemma 4} Let $L_{t},0\leq t\leq T$ be a L\'{e}vy process and let $%
\phi :[0,T]\times \mathbb{R}^{d}\rightarrow \mathbb{R}$ be a bounded
measurable function such that $\phi (\cdot ,x)$ is continuous for all $x$
and such that $\phi (t,\cdot )\in Dom(\mathcal{L})$ for all $t$, where $%
\mathcal{L}$ is the generator of $L_{t},0\leq t\leq T$. Consider 
\begin{equation}
u(t,x)=\int_{0}^{t}E[\phi (s,x+L_{t-s})]ds.  \label{(6)}
\end{equation}%
Then $u$ solves 
\begin{equation*}
\frac{\partial u}{\partial t}=\mathcal{L}_{t}u+\phi
\end{equation*}%
with $u(0,x)=0$.
\end{lemma}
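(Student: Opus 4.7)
The natural approach is to rewrite $u$ in semigroup language and differentiate. Let $P_r$ denote the transition semigroup of $L$, i.e.\ $P_rf(x)=E[f(x+L_r)]$, so that
\begin{equation*}
u(t,x)=\int_{0}^{t}P_{t-s}\phi(s,\cdot)(x)\,ds.
\end{equation*}
Since $\phi(s,\cdot)\in\mathrm{Dom}(\mathcal{L})$ for every $s$, the standard semigroup identity gives $\partial_{r}P_{r}f=\mathcal{L}P_{r}f=P_{r}\mathcal{L}f$ on $\mathrm{Dom}(\mathcal{L})$. The plan is therefore (i) to apply Leibniz's rule to differentiate the integral with variable upper limit and integrand depending on $t$, (ii) to pull $\mathcal{L}$ outside the $s$-integral, and (iii) to read off the stated PDE.

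First I would check the initial condition: plainly $u(0,x)=0$ because the integrand is over the empty interval. For the time derivative, Leibniz's rule yields
\begin{equation*}
\frac{\partial u}{\partial t}(t,x)=\bigl[P_{t-s}\phi(s,\cdot)(x)\bigr]_{s=t}+\int_{0}^{t}\frac{\partial}{\partial t}P_{t-s}\phi(s,\cdot)(x)\,ds=\phi(t,x)+\int_{0}^{t}\mathcal{L}P_{t-s}\phi(s,\cdot)(x)\,ds,
\end{equation*}
where the boundary term equals $P_{0}\phi(t,\cdot)(x)=\phi(t,x)$, and the integrand has been rewritten using $\partial_{t}P_{t-s}f=\mathcal{L}P_{t-s}f$. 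The continuity of $s\mapsto\phi(s,x)$ and boundedness of $\phi$ justify continuity of the boundary term in $t$, which is needed for Leibniz's rule.

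The second step is to exchange $\mathcal{L}$ with the outer integral, converting
\begin{equation*}
\int_{0}^{t}\mathcal{L}P_{t-s}\phi(s,\cdot)(x)\,ds=\mathcal{L}\int_{0}^{t}P_{t-s}\phi(s,\cdot)(x)\,ds=\mathcal{L}u(t,x),
\end{equation*}
which then gives $\partial_{t}u=\mathcal{L}u+\phi$ as claimed. This interchange is the main technical obstacle: $\mathcal{L}$ is defined as a limit of difference quotients, so one either appeals to the closedness of the generator together with $s$-integrability of $\mathcal{L}P_{t-s}\phi(s,\cdot)(x)$, or one verifies the limit $h^{-1}(P_{h}u(t,\cdot)(x)-u(t,x))\to\int_{0}^{t}\mathcal{L}P_{t-s}\phi(s,\cdot)(x)\,ds$ directly by Fubini and dominated convergence on
\begin{equation*}
\frac{P_{h}P_{t-s}\phi(s,\cdot)(x)-P_{t-s}\phi(s,\cdot)(x)}{h}=\frac{1}{h}\int_{0}^{h}P_{r}\mathcal{L}P_{t-s}\phi(s,\cdot)(x)\,dr,
\end{equation*}
which is bounded uniformly in $h$ by $\|P_{t-s}\mathcal{L}\phi(s,\cdot)\|_{\infty}\le\|\mathcal{L}\phi(s,\cdot)\|_{\infty}$. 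Once this exchange is justified, the two steps combine to yield the claimed equation, and the initial condition completes the proof.
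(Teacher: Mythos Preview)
Your proposal is correct and follows essentially the same route as the paper: rewrite $u$ via the semigroup $P_r$, invoke the heat equation $\partial_r P_r f=\mathcal{L}P_r f$ for $f\in\mathrm{Dom}(\mathcal{L})$, and then pass $\mathcal{L}$ through the $s$-integral. The paper's own proof is considerably terser---it simply cites the heat equation and says the result follows ``from the linearity of the operator $\mathcal{L}$''---whereas you have spelled out the Leibniz step (producing the boundary term $\phi(t,x)$) and discussed how to justify the interchange of $\mathcal{L}$ with the integral, which the paper leaves implicit.
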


\begin{proof}
Denote by $\{P_{t}\}_{t\geq 0}$ the strongly continuous semigroup on $%
C_{\infty }(\mathbb{R}^{d})$ (space of continuous functions vanishing at
infinity) associated with our L\'{e}vy process $L_{t},0\leq t\leq T$, that
is 
\begin{equation*}
P_{t}(f)(x):=E[f(x+L_{t})]
\end{equation*}%
for $f\in C_{\infty }(\mathbb{R}^{d})$. See e.g. \cite{Applebaum}. If $f\in
Dom(\mathcal{L})$ we know that $P_{t}f$ solves the heat equation 
\begin{equation*}
\frac{d}{dt}P_{t}f=\mathcal{L}P_{t}f.
\end{equation*}%
Then if follows from the linearity of the operator $\mathcal{L}$ that 
\begin{equation*}
u(t,x)=\int_{0}^{t}E[\phi (s,x+L_{t-s})]ds,
\end{equation*}%
solves the Kolmogorov equation 
\begin{equation*}
\frac{\partial u}{\partial t}(t,x)=\mathcal{L}u(t,x)+\phi (t,x),
\end{equation*}%
with $u(0,x)=0$ for all $x$.
\end{proof}

\begin{remark}
We mention that the Schwartz test function space $S(\mathbb{R}^{d})$ is
contained in $Dom(\mathcal{L})$. See \cite{Applebaum}.
\end{remark}

\bigskip

In what follows we want to consider L\'{e}vy processes $L_{t},0\leq t\leq T$
given by truncated $\alpha $-stable processes of index $\alpha \in (0,2)$,
that is L\'{e}vy processes, whose characteristic exponent is given by 
\begin{equation}
\Psi (u)=\int_{\mathbb{R}^{d}}(1-\cos (u\cdot y))\nu (dy),
\end{equation}%
with L\'{e}vy measure%
\begin{equation*}
\nu (dy)=\boldsymbol{1}_{\{\left\vert y\right\vert \leq 1\}}\frac{1}{%
\left\vert y\right\vert ^{d+\alpha }}dy.
\end{equation*}%
See e.g. \cite{KS} for further properties of this process.

\QTP{bTë?}
Note that the infinitesimal generator $\mathcal{L}$ of the process $L$ is
given by%
\begin{equation}
\mathcal{L}f(x)=\int_{\mathbb{R}^{d}}(f(x+y)-f(x))-\boldsymbol{1}%
_{\{\left\vert y\right\vert \leq 1\}}y\cdot Df(x)\nu (dy)  \label{Generator}
\end{equation}%
for $f\in C_{c}^{\infty }(\mathbb{R}^{d})$ (space of infinitely
differentiable functions with compact support). See e.g. \cite{Applebaum} .

\QTP{bTë?}
\bigskip

\QTP{bTë?}
We need the following auxiliary result:

\begin{lemma}
\label{Lemma 6} Let $f\in W_{1}^{r_{1},\ldots ,r_{d}}(\mathbb{R}^{d})$, $%
d\geq 2$ and let $r_{1},\ldots ,r_{d}\in \mathbb{N}$ such that 
\begin{equation*}
\sum_{i=1}^{d}\frac{1}{r_{i}}=1.
\end{equation*}%
Then 
\begin{equation*}
\lVert \hat{f}\rVert _{L^{1}(\mathbb{R}^{d})}\leq C\sum_{j=1}^{d}\lVert 
\frac{\partial ^{r_{j}}}{\partial x_{j}^{r_{j}}}f\rVert _{L^{1}(\mathbb{R}%
^{d})},
\end{equation*}%
where $\hat{f}$ denotes the Fourier-transform of $f$.
\end{lemma}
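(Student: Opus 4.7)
The plan is to reduce the $L^{1}$ bound on $\hat{f}$ to the derivative bounds on $f$ through the Fourier--intertwining identity, then to handle the resulting anisotropic integral by splitting $\mathbb{R}^{d}$ into regions where one coordinate dominates. As a first step, using that the Fourier transform maps $\partial_{j}^{r_{j}}$ to multiplication by $(i\xi_{j})^{r_{j}}$ and the trivial bound $\|\hat{g}\|_{L^{\infty}}\leq \|g\|_{L^{1}}$, one obtains for each $j=1,\ldots,d$ the pointwise estimate
\[
|\xi_{j}|^{r_{j}}\,|\hat{f}(\xi)|\;=\;\bigl|\widehat{\partial_{j}^{r_{j}}f}(\xi)\bigr|\;\leq\;\bigl\|\partial_{j}^{r_{j}}f\bigr\|_{L^{1}(\mathbb{R}^{d})}.
\]

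To combine these $d$ separate estimates into a single pointwise bound on $|\hat{f}(\xi)|$, the next step is to take the weighted geometric mean with weights $\alpha_{j}:=1/r_{j}$: the hypothesis $\sum_{j}1/r_{j}=1$ is exactly what makes $(\alpha_{j})_{j}$ a convex combination, so that $|\hat{f}(\xi)|=\prod_{j}|\hat{f}(\xi)|^{1/r_{j}}$ can be estimated factor by factor to give a bound of the form $|\hat{f}(\xi)|\leq K\prod_{j}|\xi_{j}|^{-1}$, where $K$ is a weighted product of the $\|\partial_{j}^{r_{j}}f\|_{L^{1}}$. A subsequent AM--GM step then allows $K$ to be controlled by $\sum_{j}\|\partial_{j}^{r_{j}}f\|_{L^{1}}$, as required for the final form of the estimate.

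The integration is carried out by splitting $\mathbb{R}^{d}=\bigcup_{j=1}^{d}A_{j}$ with $A_{j}=\{\xi:|\xi_{j}|^{r_{j}}=\max_{i}|\xi_{i}|^{r_{i}}\}$, and using on each $A_{j}$ the sharpest available bound $|\hat{f}(\xi)|\leq \|\partial_{j}^{r_{j}}f\|_{L^{1}}/|\xi_{j}|^{r_{j}}$. The geometric constraint $|\xi_{i}|\leq |\xi_{j}|^{r_{j}/r_{i}}$ that defines $A_{j}$, together with the identity
\[
\sum_{i\neq j}\frac{r_{j}}{r_{i}}\;=\;r_{j}\Bigl(1-\frac{1}{r_{j}}\Bigr)\;=\;r_{j}-1,
\]
which is where the hypothesis $\sum 1/r_{i}=1$ enters a second time, reduces the $d$-dimensional integral on $A_{j}$ to a one-dimensional integral in $|\xi_{j}|$ in which the exponents balance.

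The main obstacle I anticipate is the borderline character of this one-dimensional integral: because $\sum 1/r_{i}=1$ holds with equality, the naive scaling balances exactly and produces a logarithmic divergence, so a little extra decay has to be extracted. The natural remedy is to pair the dominant-coordinate bound on $A_{j}$ with one of the other $d-1$ directional bounds---equivalently, to refine the splitting by a dyadic decomposition in $|\xi_{j}|$---gaining just enough decay to sum the pieces. Combining the contributions over $A_{1},\ldots,A_{d}$ then yields the claimed inequality with a constant $C=C(d,r_{1},\ldots,r_{d})$.
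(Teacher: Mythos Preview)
The paper does not prove this lemma itself; it simply cites Remark~1 of \cite{Kolyada}. Your attempt at a direct argument is natural, but it has a genuine gap that the suggested fix does not close.

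The only pointwise information you extract is $|\hat f(\xi)|\leq M_{j}/|\xi_{j}|^{r_{j}}$ with $M_{j}=\lVert\partial_{j}^{r_{j}}f\rVert_{L^{1}}$, coming from the trivial bound $\lVert\hat g\rVert_{L^{\infty}}\leq\lVert g\rVert_{L^{1}}$. You correctly notice that on each cone $A_{j}$ the dominant-coordinate bound alone leads to the logarithmically divergent integral $\int|\xi_{j}|^{-1}\,d\xi_{j}$. However, your proposed remedy---pairing with another directional bound, or refining by a dyadic decomposition---cannot work: on $A_{j}$ one has $|\xi_{i}|^{r_{i}}\leq|\xi_{j}|^{r_{j}}$ for every $i$, so the other bounds $M_{i}/|\xi_{i}|^{r_{i}}$ are \emph{worse} there (up to the ratio of the constants $M_{i}$), and no combination of these pointwise majorants is integrable. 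In the model case $d=2$, $r_{1}=r_{2}=2$ the best available pointwise majorant is $\min(M_{1}/\xi_{1}^{2},\,M_{2}/\xi_{2}^{2})$, and a double dyadic decomposition $|\xi_{1}|\sim 2^{k}$, $|\xi_{2}|\sim 2^{l}$ shows this is not in $L^{1}(\mathbb{R}^{2})$: each pair $(k,l)$ with $l\leq k$ contributes $\sim 2^{\,l-k}$, the sum over $l\leq k$ is a constant independent of $k$, and the remaining sum over $k\in\mathbb{Z}$ diverges.

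The underlying reason is that $\sum_{i}1/r_{i}=1$ places the estimate exactly at the scaling-critical threshold, and at that threshold the crude $L^{1}\to L^{\infty}$ Hausdorff--Young information about $\widehat{\partial_{j}^{r_{j}}f}$ is insufficient; one must exploit more of the structure of the Fourier transform than pointwise size. Kolyada's argument is of a different character, based on rearrangement and Lorentz-space techniques rather than on pointwise majorants for $\hat f$, and that reference is where a self-contained proof should be sought.
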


\begin{proof}
See Remark 1 in \cite{Kolyada}.
\end{proof}

\begin{theorem}
\label{Theorem 5}\bigskip Let $L_{t},0\leq t\leq T$ be a $d$-dimensional
truncated $\alpha -$stable process for $\alpha \in (1,2)$ and $d\geq 2$.
Suppose that $\phi \in C([0,T],C_{b}^{\beta }(\mathbb{R}^{d}))$ for $\beta
\in (0,1)$ satisfies that $\alpha +\beta >2$. Then there exists a $u\in
C([0,T],C_{b}^{2}(\mathbb{R}^{d}))\cap C^{1}([0,T],C_{b}(\mathbb{R}^{d}))$
such that%
\begin{equation}
\frac{\partial u}{\partial t}=\mathcal{L}u+\phi ,  \label{Keq}
\end{equation}%
with $\mathcal{L}$ defined as in (\ref{Generator}) and such that%
\begin{equation}
\left\Vert Du\right\Vert _{C_{b}^{\beta }}\leq C(T)\left\Vert \phi
\right\Vert _{C_{b}^{\beta }},  \label{Est1}
\end{equation}%
where 
\begin{equation*}
C(T)\longrightarrow 0\text{ for }T\searrow 0,
\end{equation*}%
as well as 
\begin{equation}
\left\Vert D^{2}u\right\Vert _{\infty }\leq M\left\Vert \phi \right\Vert
_{C_{b}^{\beta }}  \label{Est2}
\end{equation}%
for a constant $M$.
\end{theorem}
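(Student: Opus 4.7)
The plan is to realize $u$ through the mild formula
$$u(t,x) = \int_0^t P_{t-s}\phi(s,\cdot)(x)\,ds, \qquad P_t f(x) := E[f(x+L_t)],$$
which is the time-dependent version of Lemma~\ref{Lemma 4}. Since a generic $\phi \in C([0,T], C_b^\beta(\mathbb{R}^d))$ need not lie in $\mathrm{Dom}(\mathcal{L})$, I would first regularize by spatial convolution with a smooth mollifier to obtain $\phi_n \in C([0,T], \mathcal{S}(\mathbb{R}^d))$ with $\|\phi_n\|_{C_b^\beta} \le \|\phi\|_{C_b^\beta}$ and $\phi_n \to \phi$ uniformly on compact sets. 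For each $n$, Lemma~\ref{Lemma 4} produces a smooth solution $u_n$ of $\partial_t u_n = \mathcal{L}u_n + \phi_n$ with $u_n(0,\cdot) = 0$, and the remainder of the argument will derive $n$-uniform a priori estimates allowing passage to the limit.

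The analytic core is a family of Schauder-type bounds for the transition semigroup, namely
$$\|D^k P_t f\|_\infty \le C_k\, t^{-(k-\beta)/\alpha}\,\|f\|_{C_b^\beta} \text{ for } k \in \{1,2\}, \qquad [D P_t f]_{C_b^\beta} \le C\, t^{-1/\alpha}\,\|f\|_{C_b^\beta},$$
valid for $t \in (0,T]$. To establish these I would work in Fourier variables with $\widehat{p_t}(\xi) = e^{-t\Psi(\xi)}$ and exploit $\mathrm{Re}\,\Psi(\xi) \sim c|\xi|^\alpha$ as $|\xi|\to\infty$ (since the truncated L\'evy measure coincides near the origin with the pure $\alpha$-stable one). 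Applying Lemma~\ref{Lemma 6} to the symbols $(i\xi)^\gamma e^{-t\Psi(\xi)}$ for multi-indices $\gamma$ of length $k$ converts pointwise control of the mixed partials of these symbols into $L^1$-bounds on $D^k p_t$ of order $t^{-k/\alpha}$. The sup-norm estimates on $D^k P_t f$ then follow from the cancellation identity
$$D^k P_t f(x) = (-1)^k \int_{\mathbb{R}^d} D^k p_t(y)\bigl(f(x+y)-f(x)\bigr)\,dy$$
(using $\int D^k p_t = 0$) combined with the $\beta$-H\"older regularity of $f$, which yields the extra factor $|y|^\beta$ and hence the exponent $(k-\beta)/\alpha$; the H\"older seminorm bound on $DP_t f$ is obtained by an analogous argument applied to the difference $DP_t f(x)-DP_t f(x')$.

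With these estimates in hand, the conclusions follow from direct time integration of the mild formula. For the gradient,
$$\|Du(t,\cdot)\|_\infty \le C\|\phi\|_{C_b^\beta}\int_0^t (t-s)^{-(1-\beta)/\alpha}\,ds = C\,T^{1-(1-\beta)/\alpha}\|\phi\|_{C_b^\beta},$$
and similarly $[Du(t,\cdot)]_\beta \le C\,T^{1-1/\alpha}\|\phi\|_{C_b^\beta}$; both exponents are strictly positive since $\alpha > 1$ and $\beta\in(0,1)$, so the prefactor $C(T)$ vanishes as $T\searrow 0$, yielding (\ref{Est1}). For the Hessian,
$$\|D^2 u(t,\cdot)\|_\infty \le C\|\phi\|_{C_b^\beta}\int_0^t (t-s)^{-(2-\beta)/\alpha}\,ds,$$
and the integral converges precisely when $(2-\beta)/\alpha<1$, i.e.\ when $\alpha+\beta>2$ --- exactly the standing hypothesis --- yielding (\ref{Est2}). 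Applying the same bounds to $u_n-u_m$ shows that $(u_n)$ is Cauchy in $C([0,T], C_b^2(\mathbb{R}^d))$; the limit $u$ inherits the PDE, and $\partial_t u = \mathcal{L}u+\phi \in C([0,T], C_b(\mathbb{R}^d))$ gives the $C^1([0,T], C_b)$ regularity.

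The principal obstacle is producing the semigroup estimates in the truncated setting. For the pure $\alpha$-stable process the self-similarity $p_t(x)=t^{-d/\alpha}p_1(xt^{-1/\alpha})$ reads off all scaling exponents at once, but truncation breaks this scaling and the rates $t^{-k/\alpha}$ must be extracted from $e^{-t\Psi(\xi)}$ by a careful Fourier-analytic argument through Lemma~\ref{Lemma 6}, with a split between the low-frequency region $|\xi|\le 1$, where $\Psi$ is harmless, and the high-frequency region $|\xi|\ge 1$, where the stable-like decay of the symbol drives the estimate. Once the density bounds are secured, everything else is essentially bookkeeping.
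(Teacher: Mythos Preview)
Your overall scheme---mild formula $u(t,x)=\int_0^t P_{t-s}\phi(s,\cdot)(x)\,ds$, Fourier analysis of the rescaled symbol $e^{-t\Psi(t^{-1/\alpha}\xi)}$ via Lemma~\ref{Lemma 6} to bound $\|D^k p_t\|_{L^1}$, time integration of the resulting semigroup bounds, and an approximation argument to verify the PDE---is exactly the route the paper takes. The substantive divergence is in how the key estimate $\|D^2 P_t f\|_\infty \le C\,t^{-(2-\beta)/\alpha}\|f\|_{C_b^\beta}$ is obtained. The paper first proves the two endpoint bounds $\|D^2 P_t f\|_\infty \le C\,t^{-2/\alpha}\|f\|_\infty$ and $\|D^2 P_t f\|_\infty \le C\,t^{-1/\alpha}\|Df\|_\infty$ (the latter by applying the first-order bound to $Df$) and then invokes real interpolation $(C_b,C_b^1)_{\beta,\infty}=C_b^\beta$. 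You instead propose the direct cancellation argument $|D^k P_t f(x)|\le [f]_\beta\int |D^k p_t(y)|\,|y|^\beta\,dy$. This is a perfectly standard alternative in Schauder theory, but note that it requires the \emph{weighted} bound $\int |D^k p_t(y)|\,|y|^\beta\,dy\lesssim t^{-(k-\beta)/\alpha}$, which does \emph{not} follow from the unweighted $L^1$ bound that Lemma~\ref{Lemma 6} delivers; you would need either a uniform moment estimate on the rescaled density $q_t(x):=t^{(d+k)/\alpha}(D^k p_t)(t^{1/\alpha}x)$ or a tail bound on $D^k p_t$ beyond scale $t^{1/\alpha}$. These are obtainable from the same Fourier estimates on $\partial_\xi\Psi$, $\partial_\xi^2\Psi$ that the paper already records, but you have glossed over this step, and the interpolation route sidesteps it entirely. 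A second, smaller point: your claim that $(u_n)$ is Cauchy in $C([0,T],C_b^2)$ would need $\|\phi_n-\phi_m\|_{C_b^\beta}\to 0$, which mollification of a $C_b^\beta$ function does not give globally; the paper instead establishes the estimates directly on the mild formula and verifies the PDE by a pointwise (subsequence) limit argument.
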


\begin{proof}
We subdivide the proof into two parts:

$\mathbf{(A)}$ We first want to show that $u$ defined by (\ref{(6)}) in
Lemma \ref{Lemma 4} admits the estimates (\ref{Est1}) and (\ref{Est2}):

We recall that $L_{t},0\leq t\leq T$ has the characteristic exponent%
\begin{equation*}
\Psi (u)=\int_{\left\vert y\right\vert \leq 1}\frac{1-\cos (u\cdot y)}{%
\left\vert y\right\vert ^{d+\alpha }}dy.
\end{equation*}%
So we get%
\begin{eqnarray}
t\Psi (t^{-\frac{1}{\alpha }}u) &=&\int_{\left\vert y\right\vert \leq 1}%
\frac{1-\cos (t^{-\frac{1}{\alpha }}u\cdot y)}{\left\vert y\right\vert
^{d+\alpha }}dy  \notag \\
&&\overset{y=t^{\frac{1}{\alpha }}r}{=}\int_{\left\vert r\right\vert \leq
t^{-\frac{1}{\alpha }}}t\cdot t^{\frac{d}{\alpha }}\cdot t^{-\frac{d+\alpha 
}{\alpha }}\frac{1-\cos (u\cdot r)}{\left\vert r\right\vert ^{d+\alpha }}dr 
\notag \\
&=&\int_{\left\vert r\right\vert \leq t^{-\frac{1}{\alpha }}}\frac{1-\cos
(u\cdot r)}{\left\vert r\right\vert ^{d+\alpha }}dr  \notag \\
&\geq &\int_{\left\vert r\right\vert \leq T^{-\frac{1}{\alpha }}}\frac{%
1-\cos (u\cdot r)}{\left\vert r\right\vert ^{d+\alpha }}dr  \notag \\
&:&=\widetilde{\Psi }(u).  \label{(7)}
\end{eqnarray}

Observe that%
\begin{equation}
\widetilde{\Psi }(u)\sim \left\vert u\right\vert ^{\alpha }  \label{(8)}
\end{equation}%
nearby infinity. Because of (\ref{(7)}) and (\ref{(8)}) we can apply the
Fourier inversion formula and obtain for the probability density function $%
p_{t}(x),$ the representation%
\begin{equation*}
p_{t}(x)=\frac{1}{(2\pi )^{d}}\int_{\mathbb{R}^{d}}e^{-ixu}e^{-t\Psi (u)}du.
\end{equation*}%
Hence%
\begin{eqnarray*}
p_{t}(t^{\frac{1}{\alpha }}x) &=&\frac{1}{(2\pi )^{d}}\int_{\mathbb{R}%
^{d}}e^{-it^{\frac{1}{\alpha }}xz}e^{-t\Psi (z)}dz \\
&&\overset{z=t^{-\frac{1}{\alpha }}u}{=}\frac{t^{-\frac{d}{\alpha }}}{(2\pi
)^{d}}\int_{\mathbb{R}^{d}}e^{-ixu}e^{-t\Psi (t^{-\frac{1}{\alpha }}u)}du.
\end{eqnarray*}%
Because of (\ref{(7)}) and (\ref{(8)}) we are allowed to differentiate $%
p_{t}(\cdot )$ and get%
\begin{eqnarray}
\frac{\partial }{\partial x_{i}}(p_{t}(t^{\frac{1}{\alpha }}x)) &=&t^{\frac{1%
}{\alpha }}(\frac{\partial }{\partial x_{i}}p_{t})(t^{\frac{1}{\alpha }}x) 
\notag \\
&=&\frac{t^{-\frac{d}{\alpha }}}{(2\pi )^{d}}\int_{\mathbb{R}%
^{d}}e^{-ixu}(-i)(u_{i})e^{-t\Psi (t^{-\frac{1}{\alpha }}u)}du.  \label{(9)}
\end{eqnarray}%
On the other hand we know that%
\begin{eqnarray*}
E[\phi (s,x+L_{t})] &=&\int_{\mathbb{R}^{d}}\phi (s,x+u)p_{t}(u)du \\
&=&\int_{\mathbb{R}^{d}}\phi (s,u)p_{t}(u-x)du.
\end{eqnarray*}%
So we get%
\begin{eqnarray}
\frac{\partial }{\partial x_{i}}E[\phi (s,x+L_{t})] &=&-\int_{\mathbb{R}%
^{d}}\phi (s,u)\frac{\partial }{\partial x_{i}}p_{t}(u-x)du  \notag \\
&=&-\int_{\mathbb{R}^{d}}\phi (s,u+x)(\frac{\partial }{\partial x_{i}}%
p_{t})(u)du  \notag \\
&=&-\int_{\mathbb{R}^{d}}\phi (s,t^{\frac{1}{\alpha }}u+x)t^{\frac{d}{\alpha 
}}(\frac{\partial }{\partial x_{i}}p_{t})(t^{\frac{1}{\alpha }}u)du  \notag
\\
&=&-\int_{\mathbb{R}^{d}}\phi (s,t^{\frac{1}{\alpha }}u+x)t^{\frac{d-1}{%
\alpha }}\frac{\partial }{\partial x_{i}}(p_{t}(t^{\frac{1}{\alpha }}u))du.
\label{(10)}
\end{eqnarray}%
In order to give an estimate of the $L^{1}-$norm of $\frac{\partial }{%
\partial x_{i}}(p_{t}(t^{\frac{1}{\alpha }}\cdot )),$ $i=1,...,d$ in (\ref%
{(9)}) we want to apply Lemma \ref{Lemma 6}. Without loss of generality let
us consider the case $d=2$. Then, using Lemma \ref{Lemma 6} we find%
\begin{equation*}
\left\Vert \frac{\partial }{\partial x_{i}}(p_{t}(t^{\frac{1}{\alpha }}\cdot
))\right\Vert _{L^{1}(\mathbb{R}^{d})}\leq Ct^{-\frac{d}{\alpha }%
}\sum_{j=1}^{d}\left\Vert \frac{\partial ^{2}}{\partial u_{j}^{2}}\eta
_{i}\right\Vert _{L^{1}(\mathbb{R}^{d})},
\end{equation*}%
where%
\begin{equation*}
\eta _{i}(u)=u_{i}e^{-t\Psi (t^{-\frac{1}{\alpha }}u)}.
\end{equation*}%
Let $i\neq j$. Then%
\begin{eqnarray*}
\frac{\partial ^{2}}{\partial u_{j}^{2}}\eta _{i}(u) &=&u_{i}t^{1-\frac{2}{%
\alpha }}(\frac{\partial ^{2}}{\partial u_{j}^{2}}\Psi )(t^{-\frac{1}{\alpha 
}}u)e^{-t\Psi (t^{-\frac{1}{\alpha }}u)} \\
&&+u_{i}(t^{1-\frac{1}{\alpha }}(\frac{\partial }{\partial u_{j}}\Psi )(t^{-%
\frac{1}{\alpha }}u))^{2}e^{-t\Psi (t^{-\frac{1}{\alpha }}u)}.
\end{eqnarray*}%
We know that%
\begin{eqnarray*}
(\frac{\partial }{\partial u_{j}}\Psi )(u) &=&\int_{\left\vert y\right\vert
\leq 1}y_{j}\frac{\sin (u\cdot y)}{\left\vert y\right\vert ^{d+\alpha }}dy \\
&&\overset{y=\frac{1}{\left\vert u\right\vert }r}{=}\int_{\left\vert
r\right\vert \leq \left\vert u\right\vert }\frac{1}{\left\vert u\right\vert
^{d+1}}\left\vert u\right\vert ^{d+\alpha }r_{j}\frac{\sin (\frac{u}{%
\left\vert u\right\vert }\cdot r)}{\left\vert r\right\vert ^{d+\alpha }}dr \\
&=&\left\vert u\right\vert ^{\alpha -1}\int_{\left\vert r\right\vert \leq
\left\vert u\right\vert }r_{j}\frac{\sin (\frac{u}{\left\vert u\right\vert }%
\cdot r)}{\left\vert r\right\vert ^{d+\alpha }}dr.
\end{eqnarray*}%
On the other hand we see that%
\begin{eqnarray*}
\left\vert \int_{\left\vert r\right\vert \leq \left\vert u\right\vert }r_{j}%
\frac{\sin (\frac{u}{\left\vert u\right\vert }\cdot r)}{\left\vert
r\right\vert ^{d+\alpha }}dr\right\vert &\leq &\int_{\left\vert r\right\vert
\leq \left\vert u\right\vert }\left\vert r_{j}\right\vert \frac{\left\vert
\sin (\frac{u}{\left\vert u\right\vert }\cdot r)\right\vert }{\left\vert
r\right\vert ^{d+\alpha }}dr \\
&\leq &\int_{\left\vert r\right\vert \leq \left\vert u\right\vert
}\left\vert r\right\vert \frac{\left\vert \sin (\frac{u}{\left\vert
u\right\vert }\cdot r)\right\vert }{\left\vert r\right\vert ^{d+\alpha }}dr
\\
&\leq &\int_{\left\vert r\right\vert \leq \left\vert u\right\vert }\frac{%
\left\vert \sin (\frac{u}{\left\vert u\right\vert }\cdot r)\right\vert }{%
\left\vert r\right\vert ^{d+\alpha -1}}dr \\
&\leq &M<\infty
\end{eqnarray*}%
for all $u$. So%
\begin{eqnarray}
t^{1-\frac{1}{\alpha }}\left\vert (\frac{\partial }{\partial u_{j}}\Psi
)(t^{-\frac{1}{\alpha }}u)\right\vert &\leq &Mt^{1-\frac{1}{\alpha }}(t^{-%
\frac{1}{\alpha }})^{\alpha -1}\left\vert u\right\vert ^{\alpha -1}  \notag
\\
&=&M\left\vert u\right\vert ^{\alpha -1}  \label{(11)}
\end{eqnarray}%
for all $u$. Further we have that%
\begin{eqnarray}
\left\vert \frac{\partial ^{2}}{\partial u_{j}^{2}}\Psi (u)\right\vert &\leq
&\int_{\left\vert y\right\vert \leq 1}y_{j}^{2}\frac{\left\vert \cos (u\cdot
y)\right\vert }{\left\vert y\right\vert ^{d+\alpha }}dy  \notag \\
&\leq &\int_{\left\vert y\right\vert \leq 1}\frac{1}{\left\vert y\right\vert
^{d+\alpha -2}}dy=C<\infty  \label{(13)}
\end{eqnarray}%
for all $u$. Using (\ref{(7)}), (\ref{(8)}) and (\ref{(13)}), it follows that%
\begin{eqnarray*}
\left\Vert \frac{\partial ^{2}}{\partial u_{j}^{2}}\eta _{i}\right\Vert
_{L^{1}(\mathbb{R}^{d})} &\leq &t^{1-\frac{2}{\alpha }}C\left\Vert u_{i}e^{-%
\widetilde{\Psi }(u)}\right\Vert _{L^{1}(\mathbb{R}^{d})} \\
&&+M\left\Vert u_{i}\left\vert u\right\vert ^{2(\alpha -1)}e^{-\widetilde{%
\Psi }(u)}\right\Vert _{L^{1}(\mathbb{R}^{d})}.
\end{eqnarray*}%
Similarly, we can treat the case $i=j$ and find%
\begin{eqnarray*}
\left\Vert \frac{\partial }{\partial x_{i}}(p_{t}(t^{\frac{1}{\alpha }}\cdot
))\right\Vert _{L^{1}(\mathbb{R}^{d})} &\leq &C_{1}t^{1+\frac{1-d}{\alpha }%
}+C_{2}t^{-\frac{d}{\alpha }} \\
&\leq &Ct^{-\frac{d}{\alpha }}
\end{eqnarray*}%
for constants $C_{1},C_{2}$ and $C<\infty $. This estimate and (\ref{(10)})
then give%
\begin{eqnarray*}
\left\vert \frac{\partial }{\partial x_{i}}E[\phi (s,x+L_{t})]\right\vert
&\leq &C\left\Vert \phi \right\Vert _{\infty }t^{\frac{d-1}{\alpha }}t^{-%
\frac{d}{\alpha }} \\
&=&C\left\Vert \phi \right\Vert _{\infty }t^{-\frac{1}{\alpha }}
\end{eqnarray*}%
for all $x$ and $s$.

So%
\begin{eqnarray*}
\left\vert \frac{\partial }{\partial x_{i}}u(t,x)\right\vert &\leq
&C\left\Vert \phi \right\Vert _{\infty }\int_{0}^{t}(t-s)^{-\frac{1}{\alpha }%
}ds \\
&\leq &C(T)\left\Vert \phi \right\Vert _{\infty }
\end{eqnarray*}%
for all $x,s$ with%
\begin{equation*}
C(T)\longrightarrow 0\text{ for }T\searrow 0\text{.}
\end{equation*}
Using the same arguments just as above we also get%
\begin{equation*}
\frac{\left\vert \frac{\partial }{\partial x_{i}}u(t,x)-\frac{\partial }{%
\partial x_{i}}u(t,y)\right\vert }{\left\vert x-y\right\vert ^{\beta }}\leq
C(T)\left\Vert \phi \right\Vert _{C_{b}^{\beta }}
\end{equation*}%
for all $x\neq y$ and $s$.

Let us now derive an estimate with respect to $D^{2}u$. \ We observe that%
\begin{eqnarray}
\frac{\partial ^{2}}{\partial x_{i}^{2}}E[\phi (s,x+L_{t})] &=&\int_{\mathbb{%
R}^{d}}\phi (s,u)\frac{\partial ^{2}}{\partial x_{i}^{2}}p_{t}(u-x)du  \notag
\\
&=&\int_{\mathbb{R}^{d}}\phi (s,u+x)(\frac{\partial ^{2}}{\partial x_{i}^{2}}%
p_{t})(u)du  \notag \\
&=&\int_{\mathbb{R}^{d}}\phi (s,t^{\frac{1}{\alpha }}u+x)t^{\frac{d}{\alpha }%
}(\frac{\partial ^{2}}{\partial x_{i}^{2}}p_{t})(t^{\frac{1}{\alpha }}u)du 
\notag \\
&=&\int_{\mathbb{R}^{d}}\phi (s,t^{\frac{1}{\alpha }}u+x)t^{\frac{d-2}{%
\alpha }}\frac{\partial ^{2}}{\partial x_{i}^{2}}(p_{t}(t^{\frac{1}{\alpha }%
}u))du.  \label{(14)}
\end{eqnarray}%
On the other hand it follows from (\ref{(9)}) 
\begin{equation}
\frac{\partial ^{2}}{\partial x_{i}^{2}}(p_{t}(t^{\frac{1}{\alpha }}x))=-%
\frac{t^{-\frac{d}{\alpha }}}{(2\pi )^{d}}\int_{\mathbb{R}%
^{d}}e^{-ixu}u_{i}^{2}e^{-t\Psi (t^{-\frac{1}{\alpha }}u)}du.  \label{(15)}
\end{equation}%
Then it follows from Lemma 6, (\ref{(14)}) and (\ref{(15)}) by using the
same arguments as above that%
\begin{eqnarray}
\left\vert \frac{\partial ^{2}}{\partial x_{i}^{2}}E[\phi
(s,x+L_{t})]\right\vert &\leq &C\left\Vert \phi \right\Vert _{\infty }t^{%
\frac{d-2}{\alpha }}t^{-\frac{d}{\alpha }}  \notag \\
&=&C\left\Vert \phi \right\Vert _{\infty }t^{-\frac{2}{\alpha }}  \label{P1}
\end{eqnarray}%
for all $x,s$. The case of mixed partial derivatives can be treated
similarly and we obtain%
\begin{equation}
\left\Vert D^{2}P_{t}\phi \right\Vert _{\infty }\leq C\left\Vert \phi
\right\Vert _{\infty }t^{-\frac{2}{\alpha }}  \label{(16)}
\end{equation}%
for all $\phi \in C_{b}(\mathbb{R}^{d})$ as well as%
\begin{equation}
\left\Vert D^{2}P_{t}\phi \right\Vert _{\infty }\leq C\left\Vert D\phi
\right\Vert _{\infty }t^{-\frac{1}{\alpha }}  \label{(17)}
\end{equation}%
for all $\phi \in C_{b}^{1}(\mathbb{R}^{d})$, where we used the semi-group
notation%
\begin{equation*}
(P_{t}\phi )(x)=E[\phi (x+L_{t})].
\end{equation*}%
Further, from interpolation theory (see e.g. \cite{Lunardi}), it is known
that%
\begin{equation*}
\left( C_{b}(\mathbb{R}^{d}),C_{b}^{1}(\mathbb{R}^{d})\right) _{\beta
,\infty }=C_{b}^{\beta }(\mathbb{R}^{d}).
\end{equation*}%
So using (\ref{(16)}) and (\ref{(17)}) in connection with Theorem 1.1.6 in 
\cite{Lunardi} one finds that%
\begin{eqnarray}
\left\Vert D^{2}P_{t}\phi \right\Vert _{\infty } &\leq &C\frac{1}{t^{\frac{%
2(1-\beta )}{\alpha }}}\frac{1}{t^{\frac{\beta }{\alpha }}}\left\Vert \phi
\right\Vert _{C_{b}^{\beta }(\mathbb{R}^{d})}  \notag \\
&=&C\frac{1}{t^{\frac{(2-\beta )}{\alpha }}}\left\Vert \phi \right\Vert
_{C_{b}^{\beta }(\mathbb{R}^{d})}  \label{P2}
\end{eqnarray}%
for all $\phi \in C_{b}^{\beta }(\mathbb{R}^{d})$, where $C$ is a constant
depending on $\beta $. Since by assumption $\alpha +\beta >2$ we get that%
\begin{equation}
\left\Vert D^{2}u\right\Vert _{\infty }\leq C\left\Vert \phi \right\Vert
_{C_{b}^{\beta }}  \label{D2}
\end{equation}%
for all $\phi \in C([0,T],C_{b}^{\beta }(\mathbb{R}^{d}))$.

$\mathbf{(B)}$ We aim at showing that $u$ defined by (\ref{(6)}) actually
solves the equation (\ref{Keq}) fo such $\phi $ as stated in the theorem:

We observe that%
\begin{eqnarray}
\left\vert f(y+x)-f(x)-y\cdot Df(x)\right\vert &=&\left\vert
\int_{0}^{1}(Df(x+\theta y)-Df(x))\cdot yd\theta \right\vert  \notag \\
&\leq &\left\Vert D^{2}f\right\Vert _{\infty }\left\vert y\right\vert ^{2}
\label{IneqLevy}
\end{eqnarray}%
for all $x$ and $y$ with $\left\vert y\right\vert \leq 1$. Using this
inequality we see that $\mathcal{L}f\in C_{b}(\mathbb{R}^{d})$, if $f\in
C_{b}^{2}(\mathbb{R}^{d}).$

So it follows from part $\mathbf{(A)}$ and the inequality (\ref{IneqLevy})
that $\mathcal{L}u$ is well-defined. Further, one has that $C_{\infty }^{2}(%
\mathbb{R}^{d})\subset Dom(\mathcal{L}),$ where $C_{\infty }^{2}(\mathbb{R}%
^{d}):=C_{b}^{2}(\mathbb{R}^{d})\cap C_{0}(\mathbb{R}^{d}).$ See e.g. \cite%
{Applebaum}. Hence, if $\phi \in C([0,T],C_{\infty }^{2}(\mathbb{R}^{d}))$
then we know by Lemma \ref{Lemma 4} that $u$ given by (\ref{(6)}) satisfies (%
\ref{Keq}).

Let us now assume that $\phi \in C([0,T],C_{b}^{2}(\mathbb{R}^{d})).$ Choose
a $\varphi \in C_{c}^{\infty }(\mathbb{R}^{d})$ with $\varphi (0)=1$ and set 
$\phi _{n}(t,x)=\varphi (x/n)\phi (t,x)$ for $0\leq t\leq T,x\in \mathbb{R}%
^{d}$ and $n\geq 1$. We know from the proofs in $\mathbf{(A)}$ that $u\in
C([0,T],C_{b}^{2}(\mathbb{R}^{d})).$ So using this one verifies that $\phi
_{n},u_{n}\in C([0,T],C([0,T],C_{b}^{2}(\mathbb{R}^{d}))$ for all $n,$ where 
$u_{n}(t,x):=$ $\varphi (x/n)u(t,x).$ Hence $u_{n}$ satisfies (\ref{Keq})
for all $n\geq 1$. Further, one sees that $\phi _{n}\longrightarrow \phi $, $%
D\phi _{n}$ $\longrightarrow D\phi $ pointwise and that for a constant $C$: $%
\left\Vert \phi _{n}\right\Vert _{2}\leq C$ for all $n\geq 1.$ On the other
hand by (\ref{(10)}) we obtain pointwise convergence of $Du_{n}$ to $Du$. So
using dominated convergence in connection with the estimates with (\ref%
{IneqLevy}) and (\ref{D2}) we find that $\mathcal{L}u_{n}$ converges
pointwise to $\mathcal{L}u$ for $n\longrightarrow \infty $. From this we can
see that $u$ for $\phi \in C([0,T],C_{b}^{2}(\mathbb{R}^{d}))$ solves (\ref%
{Keq}).

Finally, consider the case $\phi \in C([0,T],C_{b}^{\beta }(\mathbb{R}^{d}))$%
. Here we apply an approximation argument which can be found e.g. in the
book \cite{Krylov}: Let $\varphi _{n}(x)=n^{d}\phi (xn)$, where $\varphi \in
C_{c}^{\infty }(\mathbb{R}^{d})$ such that $\varphi (x)\in \lbrack 0,1]$ for
all $x$ and $\int_{\mathbb{R}^{d}}$ $\varphi (x)dx=1$. Define%
\begin{equation*}
\phi _{n}(t,x)=(\phi (t,\cdot )\ast \varphi _{n})(x)
\end{equation*}%
($f\ast g$ convolution of functions $f$ and $g$). One obtains that $\phi
_{n}\in C([0,T],C_{b}^{\infty }(\mathbb{R}^{d}))$ and%
\begin{equation}
\left\Vert \phi _{n}\right\Vert _{C_{b}^{\beta }}\leq \left\Vert \phi
\right\Vert _{C_{b}^{\beta }}  \label{KrylovApprox}
\end{equation}%
for all $n$. Further, we have that 
\begin{equation}
\phi _{n_{k}(t)}(t,\cdot )\longrightarrow \phi (t,\cdot )\text{ in }%
C^{\delta }(K)  \label{KrylovConv}
\end{equation}%
for all $t$, any compact set $K\subset \mathbb{R}^{d}$ and $0<\delta <\beta $
for a subsequence $n_{k}(t),k\geq 1$ depending on $t$ and $K$. See e.g. \cite%
{Krylov}. Let $K_{m},m\geq 1$ be an increasing sequence of compact sets such
that $\cup _{m\geq 1}K_{m}=\mathbb{R}^{d}$. Then for each $K_{m}$ there
exists a subsequence $n_{k}(m,t),k\geq 1$ such that (\ref{KrylovConv}) holds
. Then by choosing a diagonal sequence $n_{k}^{\ast }(t),k\geq 1$ with
respect to $n_{k}(1,t),k\geq 1,n_{k}(2,t),k\geq 1,...$ we conclude from (\ref%
{(10)}) in connection with dominated convergence that%
\begin{equation*}
(P_{r}\phi _{n_{k}^{\ast }(t)}(t,\cdot ))(x)\longrightarrow (P_{r}\phi
(t,\cdot ))(x),
\end{equation*}%
\begin{equation*}
(DP_{r}\phi _{n_{k}^{\ast }(t)}(t,\cdot ))(x)\longrightarrow (DP_{r}\phi
(t,\cdot ))(x)
\end{equation*}%
pointwise in $x$ for all $r$,$t.$ So using (\ref{(16)}), (\ref{IneqLevy})
and dominated convergence we get%
\begin{equation*}
(\mathcal{L}P_{r}\phi _{n_{k}^{\ast }(t)}(t,\cdot ))\longrightarrow (%
\mathcal{L}P_{r}\phi (t,\cdot ))(x)
\end{equation*}%
pointwise in $x$ for all $r,t.$ On the other hand we can argue as above and
find that%
\begin{equation*}
\frac{\partial }{\partial r}P_{r}\phi _{n_{k}^{\ast }(t)}(t,\cdot )=\mathcal{%
L}P_{r}\phi _{n_{k}^{\ast }(t)}(t,\cdot ).
\end{equation*}%
By employing dominated convergence we obtain that%
\begin{equation*}
\frac{\partial }{\partial r}P_{r}\phi (t,\cdot )=\mathcal{L}P_{r}\phi
(t,\cdot ).
\end{equation*}%
Then, using the proof of Lemma \ref{Lemma 4} we see that $u$ satisfies (\ref%
{Keq}) for $\phi \in C([0,T],C_{b}^{\beta }(\mathbb{R}^{d}))$.

Finally, by applying (\ref{P1}), (\ref{P2}), (\ref{IneqLevy}) and%
\begin{equation}
u(t,x)=\int_{0}^{t}((\mathcal{L}u)(s,x)+\phi (s,x))ds,
\end{equation}
in connection with dominated convergence, we see that $u\in
C^{1}([0,T],C_{b}(\mathbb{R}^{d})).$
\end{proof}

\bigskip

\bigskip

\begin{theorem}
\label{Theorem 7}\bigskip Let $L_{t},0\leq t\leq T$ be a $d$-dimensional
truncated $\alpha -$stable process for $\alpha \in (1,2)$ and $d\geq 2$.
Require that $\varphi \in C([0,T],C_{b}^{\beta }(\mathbb{R}^{d}))$ for $%
\beta \in (0,1)$ with $\alpha +\beta >2$. Then there exists a $u\in
C([0,T],C_{b}^{2}(\mathbb{R}^{d}))\cap C^{1}([0,T],C_{b}(\mathbb{R}^{d}))$
satisfying the backward Kolmogorov equation%
\begin{eqnarray}
\frac{\partial u}{\partial t}+b\cdot \nabla u+\mathcal{L}u &=&-\varphi \text{
on }[0,T],  \notag \\
\left. u\right\vert _{t=T} &=&0\text{.}  \label{BackwardK}
\end{eqnarray}%
Moreover%
\begin{equation}
\left\Vert Du\right\Vert _{C_{b}^{\beta }}\leq C(T)\left\Vert \varphi
\right\Vert _{C_{b}^{\beta }},  \label{K1}
\end{equation}%
where 
\begin{equation*}
C(T)\longrightarrow 0\text{ for }T\searrow 0,
\end{equation*}%
as well as 
\begin{equation}
\left\Vert D^{2}u\right\Vert _{\infty }\leq M\left\Vert \varphi \right\Vert
_{C_{b}^{\beta }}  \label{K2}
\end{equation}%
for a constant $M$.
\end{theorem}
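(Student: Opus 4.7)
The plan is to reduce to Theorem \ref{Theorem 5} by (i) inverting time to turn the backward equation into a forward one and (ii) absorbing the first-order term $b\cdot \nabla u$ by a contraction argument, using crucially that the constant $C(T)$ in \eqref{Est1} tends to $0$ as $T\searrow 0$.

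\textbf{Step 1 (time reversal).} Set $v(t,x):=u(T-t,x)$ and $\widetilde{\varphi}(t,x):=\varphi(T-t,x)$, $\widetilde{b}(t,x):=b(T-t,x)$. A short computation shows that \eqref{BackwardK} for $u$ is equivalent to the forward problem
\begin{equation*}
\frac{\partial v}{\partial t}=\mathcal{L} v+\widetilde{b}\cdot \nabla v+\widetilde{\varphi},\qquad v(0,\cdot)=0,
\end{equation*}
and the norms of $u$ and $v$ in $C([0,T],C_{b}^{i,\beta}(\mathbb{R}^{d}))$ coincide. It therefore suffices to construct $v$ and establish the corresponding estimates for $Dv$ and $D^{2}v$.

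\textbf{Step 2 (fixed point on a small interval).} Fix $\tau \in (0,T]$ to be chosen and consider the map $\Gamma:C([0,\tau],C_{b}^{1,\beta}(\mathbb{R}^{d}))\to C([0,\tau],C_{b}^{1,\beta}(\mathbb{R}^{d}))$ sending $w$ to the unique solution $\Gamma(w)$ furnished by Theorem \ref{Theorem 5} of
\begin{equation*}
\frac{\partial \Gamma(w)}{\partial t}=\mathcal{L}\,\Gamma(w)+\bigl(\widetilde{b}\cdot \nabla w+\widetilde{\varphi}\bigr),\qquad \Gamma(w)(0,\cdot)=0.
\end{equation*}
The key observation is the product estimate
\begin{equation*}
\|\widetilde{b}\cdot \nabla w\|_{C_{b}^{\beta}}\leq K\,\|b\|_{C_{b}^{\beta}}\,\|\nabla w\|_{C_{b}^{\beta}},
\end{equation*}
which holds because $C_{b}^{\beta}$ is a multiplication algebra (bounded $\beta$-Hölder times bounded $\beta$-Hölder). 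Combined with \eqref{Est1} applied to $\phi=\widetilde{b}\cdot \nabla(w_{1}-w_{2})$ this yields
\begin{equation*}
\|D\Gamma(w_{1})-D\Gamma(w_{2})\|_{C_{b}^{\beta}}\leq C(\tau)\,K\,\|b\|_{C_{b}^{\beta}}\,\|\nabla w_{1}-\nabla w_{2}\|_{C_{b}^{\beta}}.
\end{equation*}
Since $C(\tau)\to 0$ as $\tau\searrow 0$, we can pick $\tau$ so small that $C(\tau)K\|b\|_{C_{b}^{\beta}}\leq 1/2$. Then $\Gamma$ is a strict contraction on the closed subspace of $C([0,\tau],C_{b}^{1,\beta}(\mathbb{R}^{d}))$ consisting of those $w$ with $w(0,\cdot)=0$, once this space is equipped with the seminorm $\sup_{t}\|Dw(t,\cdot)\|_{C_{b}^{\beta}}$ (note that $w(0)=0$ plus a bound on $Dw$ controls $w$ itself). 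Its fixed point $v$ lies in $C([0,\tau],C_{b}^{2}(\mathbb{R}^{d}))\cap C^{1}([0,\tau],C_{b}(\mathbb{R}^{d}))$ and solves the forward equation, again by Theorem \ref{Theorem 5}.

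\textbf{Step 3 (global existence and the estimates \eqref{K1}, \eqref{K2}).} To pass from $[0,\tau]$ to $[0,T]$ one patches: on $[\tau,2\tau]$ one solves the same forward problem with inhomogeneous initial datum $v(\tau,\cdot)$ (the construction is identical since only the fact $C(\tau)K\|b\|_{C_{b}^{\beta}}<1$ is used and this is independent of the initial datum), and iterates finitely many times. Once $v$ exists on the full interval, apply \eqref{Est1} to $\phi=\widetilde{b}\cdot \nabla v+\widetilde{\varphi}$ on each subinterval to obtain
\begin{equation*}
\|Dv\|_{C_{b}^{\beta}}\leq C(\tau)\bigl(K\|b\|_{C_{b}^{\beta}}\|Dv\|_{C_{b}^{\beta}}+\|\varphi\|_{C_{b}^{\beta}}\bigr),
\end{equation*}
and absorb the $Dv$ term on the left using the smallness of $C(\tau)K\|b\|_{C_{b}^{\beta}}$. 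A finite patching argument over $\lceil T/\tau\rceil$ subintervals then gives \eqref{K1} with a global $C(T)$ that still tends to $0$ with $T$. The estimate \eqref{K2} follows from \eqref{Est2} applied to $\phi=\widetilde{b}\cdot \nabla v+\widetilde{\varphi}$ together with the already-established control on $\|Dv\|_{C_{b}^{\beta}}$. Reversing time converts all statements back to $u$.

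The main technical hurdle is the contraction step: one must be sure that the norm in which $\Gamma$ contracts is rich enough to yield the $C([0,T],C_{b}^{2})\cap C^{1}([0,T],C_{b})$ regularity of the limit, which is where \eqref{Est2} (giving uniform control of $D^{2}v$) and the time-continuity assertions in Theorem \ref{Theorem 5} do the required work.
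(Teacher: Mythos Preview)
Your approach is essentially the same as the paper's: both reduce the backward equation to the driftless result of Theorem \ref{Theorem 5} and then absorb the term $b\cdot\nabla u$ by exploiting the smallness of $C(T)$ for small $T$. The paper carries this out by explicit Picard iteration (defining $u^{0}=0$ and solving $\partial_{t}u^{n+1}+\mathcal{L}u^{n+1}=-(b\cdot\nabla u^{n})-\varphi$, $u^{n+1}|_{t=T}=0$, then showing the sequence is Cauchy in $\|D\cdot\|_{C_{b}^{\beta}}$, $\|\cdot\|_{C_{b}^{\beta}}$ and $\|D^{2}\cdot\|_{\infty}$), whereas you phrase the same iteration as a Banach fixed point for a map $\Gamma$ after a time reversal. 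These are equivalent arguments.

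Two remarks. First, the paper does \emph{not} perform your Step 3 patching: it simply chooses $T$ small enough that $2C(T)\|b\|_{C_{b}^{\beta}}\leq\tfrac12$ and works on the whole interval $[0,T]$ in one go; this is consistent with how the result is used downstream (Theorem \ref{MainResult} is stated only for sufficiently small $T$). Second, if you do want to patch, be aware that Theorem \ref{Theorem 5} is only formulated with zero initial data, so on $[\tau,2\tau]$ you must first subtract the free evolution $P_{t-\tau}v(\tau,\cdot)$ and check that the resulting inhomogeneity $\widetilde b\cdot\nabla P_{t-\tau}v(\tau,\cdot)+\widetilde\varphi$ stays in $C([\tau,2\tau],C_{b}^{\beta})$; this is true because $v(\tau,\cdot)\in C_{b}^{1,\beta}$, but it should be said explicitly rather than asserted as ``identical''.
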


\begin{proof}
We want to use Picard iteration based on (\ref{Keq}) to construct a solution
to (\ref{BackwardK}) (compare Theorem 2.8 in \cite{Flandoli} in the case of
Brownian motion): Let $u^{0}=0$ and define for $n\geq 0$%
\begin{eqnarray}
\frac{\partial u^{n+1}}{\partial t}+\mathcal{L}u^{n+1} &=&-(b\cdot \nabla
u^{n})-\varphi \text{ on }[0,T],  \notag \\
\left. u^{n+1}\right\vert _{t=T} &=&0\text{.}  \label{Picard}
\end{eqnarray}%
Since $u$ in Theorem \ref{Theorem 5} belongs to $C([0,T],C_{b}^{2}(\mathbb{R}%
^{d}))\cap C^{1}([0,T],C_{b}(\mathbb{R}^{d})),$ we see from (\ref{Est1}) that%
\begin{eqnarray*}
\left\Vert b\cdot \nabla u+\varphi \right\Vert _{C_{b}^{\beta }} &\leq
&\left\Vert \varphi \right\Vert _{C_{b}^{\beta }}+2\left\Vert b\right\Vert
_{C_{b}^{\beta }}\left\Vert \nabla u\right\Vert _{C_{b}^{\beta }} \\
&\leq &\left\Vert \varphi \right\Vert _{C_{b}^{\beta }}+2\left\Vert
b\right\Vert _{C_{b}^{\beta }}C(T)\left\Vert \phi \right\Vert _{C_{b}^{\beta
}} \\
&<&\infty
\end{eqnarray*}%
for $\phi \in C([0,T],C_{b}^{\beta }(\mathbb{R}^{d}))$. So it follows from
Theorem \ref{Theorem 5} that we obtain in each iteration step a solution%
\begin{equation*}
u^{n+1}\in C([0,T],C_{b}^{2}(\mathbb{R}^{d}))\cap C^{1}([0,T],C_{b}(\mathbb{R%
}^{d})).
\end{equation*}%
Let us now choose a $T>0$ in Theorem \ref{Theorem 5} such that%
\begin{equation*}
2C(T)\left\Vert b\right\Vert _{C_{b}^{\beta }}\leq \frac{1}{2}.
\end{equation*}%
Then, using the estimates (\ref{P1}) and (\ref{P2}) in the proof of Theorem %
\ref{Theorem 5} we find for all $n\geq 0$ that%
\begin{eqnarray*}
&&\left\Vert \nabla u^{n+1}-\nabla u^{n}\right\Vert _{C_{b}^{\beta
}}=\left\Vert \nabla (u^{n+1}-u^{n})\right\Vert _{C_{b}^{\beta }} \\
&\leq &C(T)\left\Vert (b\cdot \nabla u^{n}+\varphi )-(b\cdot \nabla
u^{n-1}+\varphi )\right\Vert _{C_{b}^{\beta }} \\
&=&C(T)\left\Vert b\cdot (\nabla u^{n}-\nabla u^{n-1})\right\Vert
_{C_{b}^{\beta }} \\
&\leq &2C(T)\left\Vert b\right\Vert _{C_{b}^{\beta }}\left\Vert \nabla
u^{n}-\nabla u^{n-1}\right\Vert _{C_{b}^{\beta }} \\
&\leq &\frac{1}{2}\left\Vert \nabla u^{n}-\nabla u^{n-1}\right\Vert
_{C_{b}^{\beta }} \\
&&... \\
&\leq &(\frac{1}{2})^{n}\left\Vert \nabla u\right\Vert _{C_{b}^{\beta }} \\
&\leq &(\frac{1}{2})^{n}C(T)\left\Vert \varphi \right\Vert _{C_{b}^{\beta }},
\end{eqnarray*}%
\begin{eqnarray*}
&&\left\Vert u^{n+1}-u^{n}\right\Vert _{C_{b}^{\beta }} \\
&\leq &K\left\Vert b\cdot (\nabla u^{n}-\nabla u^{n-1})\right\Vert
_{C_{b}^{\beta }} \\
&\leq &2K\left\Vert b\right\Vert _{C_{b}^{\beta }}\left\Vert \nabla
u^{n}-\nabla u^{n-1}\right\Vert _{C_{b}^{\beta }} \\
&\leq &2K\left\Vert b\right\Vert _{C_{b}^{\beta }}(\frac{1}{2}%
)^{n-1}C(T)\left\Vert \varphi \right\Vert _{C_{b}^{\beta }} \\
&\leq &K\left\Vert \varphi \right\Vert _{C_{b}^{\beta }}(\frac{1}{2})^{n}
\end{eqnarray*}%
as well as 
\begin{eqnarray*}
\left\Vert D^{2}u^{n+1}-D^{2}u^{n}\right\Vert _{\infty } &=&\left\Vert
D^{2}(u^{n+1}-u^{n})\right\Vert _{\infty } \\
&\leq &C(T)\left\Vert (b\cdot \nabla u^{n})+\varphi -((b\cdot \nabla
u^{n-1})+\varphi )\right\Vert _{C_{b}^{\beta }} \\
&=&C(T)\left\Vert b\cdot (\nabla u^{n}-\nabla u^{n-1})\right\Vert
_{C_{b}^{\beta }} \\
&\leq &2C(T)\left\Vert b\right\Vert _{C_{b}^{\beta }}\left\Vert \nabla
u^{n}-\nabla u^{n-1}\right\Vert _{C_{b}^{\beta }} \\
&\leq &\frac{1}{2}\left\Vert \nabla u^{n}-\nabla u^{n-1}\right\Vert
_{C_{b}^{\beta }} \\
&\leq &(\frac{1}{2})^{n}C(T)\left\Vert \varphi \right\Vert _{C_{b}^{\beta }}.
\end{eqnarray*}%
So we get that%
\begin{eqnarray*}
&&\left\Vert \nabla u^{n+p}-\nabla u^{n}\right\Vert _{C_{b}^{\beta }} \\
&\leq &\sum_{j=0}^{p-1}\left\Vert \nabla u^{n+p-j}-\nabla
u^{n+p-j-1}\right\Vert _{C_{b}^{\beta }} \\
&\leq &C(T)\left\Vert \varphi \right\Vert _{C_{b}^{\beta }}\sum_{j=0}^{p-1}(%
\frac{1}{2})^{n+p-j-1} \\
&=&C(T)\left\Vert \varphi \right\Vert _{C_{b}^{\beta }}(\frac{1}{2}%
)^{n}\sum_{j=0}^{p-1}(\frac{1}{2})^{j}
\end{eqnarray*}%
and similarly%
\begin{equation*}
\left\Vert u^{n+p}-u^{n}\right\Vert _{C_{b}^{\beta }}\leq K\left\Vert
\varphi \right\Vert _{C_{b}^{\beta }}(\frac{1}{2})^{n}\sum_{j=0}^{p-1}(\frac{%
1}{2})^{j}
\end{equation*}%
and 
\begin{equation*}
\left\Vert D^{2}u^{n+p}-D^{2}u^{n}\right\Vert _{\infty }\leq C(T)\left\Vert
\varphi \right\Vert _{C_{b}^{\beta }}(\frac{1}{2})^{n}\sum_{j=0}^{p-1}(\frac{%
1}{2})^{j}
\end{equation*}%
for all $n,p\geq 0$.

So we see that there exists a function $u$ such that 
\begin{equation*}
u^{n}\longrightarrow u,\text{ }\nabla u^{n}\longrightarrow \nabla u,\text{ }%
D^{2}u^{n}\longrightarrow D^{2}u\text{ }
\end{equation*}%
uniformly in $t,x$.

Using (\ref{IneqLevy}) and dominated convergence, we also observe that%
\begin{equation*}
\mathcal{L}u_{n}\longrightarrow \mathcal{L}u
\end{equation*}%
uniformly in $t,x$. Passing to the limit we obtain the equation 
\begin{equation}
u(t,x)=\int_{t}^{T}((\mathcal{L}u)(s,x)+b(s,x)(\nabla u)(s,x)+\varphi
(s,x))ds,n\geq 0  \label{IntEq}
\end{equation}

Finally, by employing (\ref{P1}), (\ref{P2}), (\ref{IneqLevy}) and (\ref%
{IntEq}) in connection with dominated convergence, we get that $u\in
C^{1}([0,T],C_{b}(\mathbb{R}^{d})).$
\end{proof}

\bigskip

\begin{remark}
We mention that the statements of Theorem \ref{Theorem 5} and \ref{Theorem 7}
are valid for all dimensions $d\geq 1.$ The case $d=1$ can be shown by using
the inequality%
\begin{equation*}
\left\Vert \widehat{u}\right\Vert _{L^{1}(\mathbb{R})}\leq C\left\Vert
u\right\Vert _{H^{s}(\mathbb{R})}
\end{equation*}%
for $s>\frac{1}{2},$ where $\widehat{u}$ is the Fourier transform of $u\in
H^{s}(\mathbb{R})=W^{s,2}(\mathbb{R})$ and $C$ a universal constant. See 
\cite{AF03} and Section 2.4.
\end{remark}

\bigskip

Using Theorem \ref{Theorem 7} we can rewrite $\int_{0}^{t}b(s,X_{s})ds$ in (%
\ref{eq:main}) in terms of a more regular expression.

\begin{corollary}[Representation of $\protect\int_{0}^{t}b(s,X_{s})ds$ ]
\label{Corollary 8} Retain the assumptions of Theorem \ref{Theorem 7} for $%
\phi =-b$ in (\ref{eq:main}). Suppose the drift coefficient $b$ admits the
existence of a strong solution $X_{\cdot }$ to (\ref{eq:main}). Then we have
the following representation:%
\begin{eqnarray*}
&&\int_{0}^{t}b(s,X(s))ds \\
&=&u(0,x)-u(t,X(t))+\int_{0}^{t}\int_{\mathbb{R}^{d}}\{u(s,X(s^{-})+\gamma
(z))-u(s,X(s^{-}))\}\widetilde{N}(ds,dz),
\end{eqnarray*}%
where%
\begin{equation*}
\gamma (z):=\mathbf{1}_{\{\left\vert z\right\vert \leq 1\}}z.
\end{equation*}
\end{corollary}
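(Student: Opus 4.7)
The plan is to apply It\^{o}'s formula for semimartingales with jumps to the process $u(t,X_t)$, where $u\in C([0,T],C_b^{2}(\mathbb{R}^d))\cap C^{1}([0,T],C_b(\mathbb{R}^d))$ is the solution of the backward Kolmogorov equation (\ref{BackwardK}) supplied by Theorem \ref{Theorem 7}, and then to use the PDE itself to replace the resulting finite variation integrand by $b(s,X_s)$. Because the L\'{e}vy measure $\nu(dy)=\mathbf{1}_{\{|y|\le 1\}}|y|^{-d-\alpha}dy$ is symmetric and supported inside the unit ball, the L\'{e}vy-It\^{o} decomposition collapses to $L_t=\int_{0}^{t}\int_{|z|\le 1}z\,\widetilde{N}(ds,dz)$, so that the SDE (\ref{eq:main}) reads
\begin{equation*}
dX_s = b(s,X_s)\,ds + \int_{|z|\le 1} z\,\widetilde{N}(ds,dz),
\end{equation*}
with no Brownian part and no ``big jump'' contribution.

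I would then expand $u(t,X_t)$ by It\^{o}'s formula, keeping in mind that the continuous martingale part of $X$ vanishes. The usual bookkeeping turns the jump sum into $\int_{0}^{t}\int_{|z|\le 1}\{u(s,X_{s^-}+z)-u(s,X_{s^-})\}N(ds,dz)$; splitting $N=\widetilde{N}+\nu(dz)\,ds$ and writing $u(x+z)-u(x)=[u(x+z)-u(x)-\nabla u(x)\cdot z\mathbf{1}_{\{|z|\le 1\}}]+\nabla u(x)\cdot z\mathbf{1}_{\{|z|\le 1\}}$, the symmetry of $\nu$ kills the integral of the linear correction, leaving exactly $(\mathcal{L}u)(s,X_s)$ from (\ref{Generator}). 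Assembling the pieces yields
\begin{equation*}
u(t,X_t)-u(0,x)=\int_{0}^{t}\{\partial_s u+b\cdot\nabla u+\mathcal{L}u\}(s,X_s)\,ds+\int_{0}^{t}\int_{|z|\le 1}\{u(s,X_{s^-}+z)-u(s,X_{s^-})\}\widetilde{N}(ds,dz).
\end{equation*}
Substituting the PDE (\ref{BackwardK}) for the bracket on the right and rearranging gives the claimed representation. The $\widetilde{N}$-integral may be extended from $\{|z|\le 1\}$ to all of $\mathbb{R}^{d}$ without change, since $\gamma(z)=z\mathbf{1}_{\{|z|\le 1\}}$ forces the integrand to vanish for $|z|>1$.

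The main things to verify are that It\^{o}'s formula legitimately applies to $u$ and that the stochastic integral against $\widetilde{N}$ is a genuine square integrable martingale. Both are consequences of Theorem \ref{Theorem 7}: the required $C^{1,2}$ regularity is part of its conclusion, while the martingale property follows from the mean value bound $|u(s,X_{s^-}+z)-u(s,X_{s^-})|\le \|\nabla u\|_{\infty}|z|$ combined with $\int_{|z|\le 1}|z|^{2}\nu(dz)<\infty$, which holds for $\alpha\in(1,2)$. Once those points are in hand, the proof is a direct It\^{o} computation; no pathwise uniqueness or further analytic input is needed.
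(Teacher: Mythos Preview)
Your proposal is correct and follows exactly the paper's route: apply It\^{o}'s formula to $u(t,X_t)$ for the solution $u$ of the backward Kolmogorov equation (\ref{BackwardK}) furnished by Theorem~\ref{Theorem 7}, identify the finite-variation part as $\int_0^t\{\partial_s u+b\cdot\nabla u+\mathcal{L}u\}(s,X_s)\,ds$, and replace it by $-\int_0^t b(s,X_s)\,ds$ via the PDE. The paper carries out precisely this computation; your added remarks on the $C^{1,2}$ regularity from Theorem~\ref{Theorem 7} and the mean-value bound ensuring square-integrability of the $\widetilde{N}$-integrand simply make explicit the hypotheses under which the It\^{o} step is justified.
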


\begin{proof}
Let $u$ be the solution to the backward Kolmogorov equation in Theorem \ref%
{Theorem 7} for $\phi =-b$. Then, using It\^{o}'s Lemma, we obtain: 
\begin{align*}
& u(t,X(t))=u(0,x)+\int_{0}^{t}\frac{\partial u}{\partial s}%
(s,X(s))ds+\int_{0}^{t}b(s,X(s))\nabla _{x}u(s,X(s))ds \\
& +\int_{0}^{t}\int_{\mathbb{R}^{d}}\{u(s,X(t^{-})+\gamma
(z))-u(s,X(t^{-}))-\gamma (z)D_{x}u(s,X(s))\}\nu (dz)ds \\
& +\int_{0}^{t}\int_{\mathbb{R}}\{u(s,X(t^{-})+\gamma (z))-u(s,X(t^{-}))\}%
\widetilde{N}(ds,dz).
\end{align*}%
Because of (\ref{Generator}) and%
\begin{equation*}
\frac{\partial u}{\partial t}+b\nabla u+\mathcal{L}u=-b
\end{equation*}%
we get 
\begin{align*}
& \int_{0}^{t}b(s,X(s))ds \\
& =u(0,x)-u(t,X(t))+\int_{0}^{t}\int_{\mathbb{R}}\{u(s,X(t^{-})+\gamma
(z))-u(s,X(t^{-}))\}\widetilde{N}(ds,dz).
\end{align*}
\end{proof}

\bigskip

\section{Construction of Solutions to SDE's via the Compactness Criterion
for $L^{2}(\protect\mu )$}

\bigskip In this section we want to apply the compactness criterion in
Theorem \ref{Theorem 3} in connection with the results of the previous
section to construct strong solutions to the SDE (\ref{eq:main}), when $%
L_{t},0\leq t\leq T$ is a truncated $\alpha -$stable process of index $%
\alpha \in (1,2)$ and the drift coefficient $b\in C([0,T],C_{b}^{\beta }(%
\mathbb{R}^{d}))$ such that $\alpha +\beta >2$.

To this end, we aim defining a self adjoint operator $A$ on $L^{2}((0,\tau
))\otimes L^{2}(\nu )$ (for fixed $\tau >0$) which admits a compact inverse $%
A^{-1}:L^{2}((0,\tau ))\otimes L^{2}(\nu )\longrightarrow L^{2}((0,\tau
))\otimes L^{2}(\nu )$. More precisely, the operator $A$ is constructed as
follows:

Let the function $p$ (potential) be given by 
\begin{equation}
p(t,x)=\left\{ 
\begin{array}{cc}
\frac{1}{\left\vert x\right\vert ^{d+\alpha +\delta }} & ,\text{ if }%
\left\vert x\right\vert \leq \frac{1}{2} \\ 
\frac{2^{d+\alpha }}{(1-\left\vert x\right\vert )^{1/2}} & ,\text{ if }\frac{%
1}{2}<\left\vert x\right\vert <1%
\end{array}%
\right.  \label{p}
\end{equation}%
for some $\delta >0$ such that $\alpha +\delta <2$.

Consider now for fixed $\tau >0$ the symmetric form $\mathcal{E}$ on $%
L^{2}((0,\tau ))\otimes L^{2}(\nu )$ defined as%
\begin{eqnarray}
\mathcal{E}(f,g) &=&\int_{0}^{\tau }\int_{0}^{\tau }\int_{\left\vert
y\right\vert <1}\int_{\left\vert x\right\vert <1}\frac{\left(
f(t_{1},x)-f(t_{2},y)\right) \left( g(t_{1},x)-g(t_{2},y)\right) }{%
(\left\vert t_{1}-t_{2}\right\vert +\left\vert x-y\right\vert )^{d+1+2s}}%
dxdydt_{1}dt_{2}  \notag \\
&&+\int_{0}^{\tau }\int_{\left\vert x\right\vert <1}p(t,x)f(t,x)g(t,x)dxdt
\label{SymmForm}
\end{eqnarray}%
for functions $f,g$ in the (dense) domain $D(\mathcal{E})\subset
L^{2}((0,\tau ))\otimes L^{2}(\nu )$ and a fixed $s$ with%
\begin{equation*}
0<s<\frac{1}{2}\text{,}
\end{equation*}%
where%
\begin{eqnarray*}
&&D(\mathcal{E}) \\
&:&=\left\{ f:\right. \left\Vert f\right\Vert _{L^{2}((0,\tau ))\otimes
L^{2}(\nu )}^{2}+\int_{0}^{\tau }\int_{0}^{\tau }\int_{\left\vert
y\right\vert <1}\int_{\left\vert x\right\vert <1}\frac{\left\vert
f(t_{1},x)-f(t_{2},y)\right\vert ^{2}}{(\left\vert t_{1}-t_{2}\right\vert
+\left\vert x-y\right\vert )^{d+1+2s}}dxdydt_{1}dt_{2} \\
&&\left. +\int_{0}^{\tau }\int_{\left\vert x\right\vert <1}p(t,x)\left\vert
f(t,x)\right\vert ^{2}dxdt<\infty \right\} .
\end{eqnarray*}%
Then $\mathcal{E}$ is a positive symmetric closed form and we can find by
Kato%
\'{}%
s first representation theorem (see e.g. \cite{Kato95}) a positive self
adjoint operator $T_{\mathcal{E}}$ such that%
\begin{equation*}
\mathcal{E}(f,g)=\left( f,T_{\mathcal{E}}g\right) _{L^{2}((0,\tau ))\otimes
L^{2}(\nu )}
\end{equation*}%
for for all $g\in D(T_{\mathcal{E}}),$ $f\in D(\mathcal{E}).$ Further, since 
\begin{equation*}
\mathcal{E}(f,f)\geq \left\Vert f\right\Vert _{L^{2}((0,\tau ))\otimes
L^{2}(\nu )}^{2}
\end{equation*}%
for all $f\in D(\mathcal{E})$, that is the form $\mathcal{E}$ is bounded
from below by a positive number, we also have that $D(\mathcal{E})=D(T_{%
\mathcal{E}}^{1/2})$. See \cite{Kato95}.

Let us now define the operator $A$ in Theorem \ref{Theorem 3} as 
\begin{equation}
A=T_{\mathcal{E}}^{1/2}.  \label{A2}
\end{equation}%
We want to show that $A$ has a discrete spectrum and a compact operator
inverse $A^{-1}$. To verify this we prove that $T_{\mathcal{E}}$ has a
discrete spectrum with existing compact operator inverse. Before we proceed
we briefly introduce some notation: Consider now a general symmetric closed
form $\mathcal{E}$ bounded from below by a positive number with a domain $D(%
\mathcal{E})$ that is dense in the Hilbert space $H=L^{2}((0,\tau ))\otimes
L^{2}(\nu )$. We assume here that $\nu $ is a L\'{e}vy measure with
Lebesgue-density $w$. Let $\Omega $ be an open subset of $(0,\tau )\times 
\mathbb{R}^{d}$ and we assume that $\Omega $ is the union of an increasing
sequence of open sets $\Omega _{k}\subset \Omega ,k\geq 1.$ Further, we
denote by $H_{\mathcal{E}}(\Omega _{k})$ the inner product space with
respect to $\left\{ f\mathbf{1}_{\Omega _{k}}:f\in D(\mathcal{E})\right\} $
and the inner product $\left( f,g\right) _{\mathcal{E}}=\mathcal{E}(f,g)$.
Similarly, we define space $H_{\mathcal{E}}(\Omega )$.

We need the following auxiliary result:

\begin{lemma}
\label{Lewis} Let $\Omega $ be as above and assume that that the identity
maps $i_{k}:H_{\mathcal{E}}(\Omega _{k})\longrightarrow L^{2}(\Omega
_{k},dt\times \nu ),k\geq 1$ are compact. Suppose there is a positive-valued
function $p$ on $\Omega $ and a sequence $\varepsilon _{k},k\geq 1$ of
positive numbers with $\varepsilon _{k}\longrightarrow 0,k\longrightarrow
\infty $ such that%
\begin{equation}
w(x)/p(x)<\varepsilon _{k}  \label{L1}
\end{equation}%
for a.e. $x\in $ $\Omega -\Omega _{k}$ and%
\begin{equation}
\int_{\Omega -\Omega _{k}}p(t,x)\left\vert f(t,x)\right\vert ^{2}dx\leq 
\mathcal{E}(f,f)  \label{L2}
\end{equation}%
for all $f\in D(\mathcal{E})$. Then $T_{\mathcal{E}}$ has a discrete
spectrum and a compact inverse $T_{\mathcal{E}}^{-1}$.
\end{lemma}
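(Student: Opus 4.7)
The plan is to reduce the spectral claim to showing that the embedding $i: H_{\mathcal{E}}(\Omega) \hookrightarrow L^{2}(\Omega, dt \times \nu)$ is compact. Once this is established, the positive lower bound $\mathcal{E}(f,f) \geq \|f\|^{2}$ makes $T_{\mathcal{E}}$ coercive and hence boundedly invertible, and a short factorization shows $T_{\mathcal{E}}^{-1}$ itself is compact; being self-adjoint, the spectral theorem for compact self-adjoint operators then delivers the discrete spectrum.

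To prove the embedding is compact I take a sequence $(f_{n})$ bounded in $H_{\mathcal{E}}(\Omega)$, say $\mathcal{E}(f_{n}, f_{n}) \leq M$ for all $n$. For each fixed $k$ the sequence $(f_{n}\mathbf{1}_{\Omega_{k}})$ is bounded in $H_{\mathcal{E}}(\Omega_{k})$, so the hypothesized compactness of $i_{k}$ produces a subsequence converging in $L^{2}(\Omega_{k}, dt \times \nu)$. A Cantor diagonal extraction then yields a single subsequence $(f_{n_{j}})$ whose restriction to every $\Omega_{k}$ converges in $L^{2}(\Omega_{k}, dt \times \nu)$. The decisive step is the uniform tail bound: writing $\nu(dx) = w(x)\, dx$ and combining (L1) with (L2),
\begin{equation*}
\int_{\Omega \setminus \Omega_{k}} |f_{n_{j}} - f_{n_{l}}|^{2} \, dt \, d\nu \leq \varepsilon_{k} \int_{\Omega \setminus \Omega_{k}} p \, |f_{n_{j}} - f_{n_{l}}|^{2} \, dt \, dx \leq \varepsilon_{k} \, \mathcal{E}(f_{n_{j}} - f_{n_{l}}, f_{n_{j}} - f_{n_{l}}) \leq 4 M \varepsilon_{k}.
\end{equation*}
Given $\epsilon > 0$, I first pick $k$ so large that $4 M \varepsilon_{k} < \epsilon/2$ and then use the diagonal convergence on $\Omega_{k}$ to make the remaining contribution smaller than $\epsilon/2$ for large $j, l$. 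Hence $(f_{n_{j}})$ is Cauchy in $L^{2}(\Omega, dt \times \nu)$, which gives the sought compactness of $i$.

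For the final spectral step, for $u = T_{\mathcal{E}}^{-1} f$ the identity $\mathcal{E}(u, u) = (u, T_{\mathcal{E}} u) = (u, f)$ combined with $\mathcal{E}(u, u) \geq \|u\|^{2}$ yields $\|u\|_{\mathcal{E}}^{2} \leq \|f\|^{2}$, so $T_{\mathcal{E}}^{-1}: L^{2}(\Omega, dt \times \nu) \to H_{\mathcal{E}}(\Omega)$ is bounded; composing with the compact embedding just established shows $T_{\mathcal{E}}^{-1}$ is a compact self-adjoint operator on $L^{2}(\Omega, dt \times \nu)$, from which the discrete spectrum follows. I expect the main obstacle to be precisely the tail control: it is not clear \emph{a priori} how to bound $\int_{\Omega \setminus \Omega_{k}} |f|^{2} w$ in terms of the form $\mathcal{E}$, and the role of the auxiliary potential $p$ in hypotheses (L1), (L2) is exactly to bridge this gap by transferring mass from the uncontrolled weight $w$ to a quantity dominated by $\mathcal{E}(f, f)$.
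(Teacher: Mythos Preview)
The paper does not supply its own proof of this lemma; it simply cites Lemma~1 in \cite{Lew82}. Your argument is the standard Molchanov--Rellich type proof (compactness on exhausting subdomains plus a uniform tail estimate furnished by the potential $p$), and it is essentially the argument given in Lewis's paper, so your approach is correct and matches the cited source.
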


\begin{proof}
\bigskip See Lemma 1 in \cite{Lew82}.
\end{proof}

\bigskip We now choose the function $p$ in Lemma \ref{Lewis} as in (\ref{p})
and we assume that $\nu $ is the L\'{e}vy measure of a truncated $\alpha -$%
stable L\'{e}vy process. Further, suppose that $\Omega _{k}\subset \Omega
:=(0,\tau )\times (U_{1}(0)-\{0\})$ with $\pi _{2}(\Omega -\Omega _{k})$ is
bounded away from $y=0$ and $\left\{ y:\left\vert y\right\vert =1\right\}
,k\geq 1$ ($\pi _{2}((t,y))=y$ projection onto the spatial component) such
that each $\Omega _{k}$ is of class $C^{0,1}$ with bounded boundary and $%
\Omega _{k}\nearrow \Omega $ and such that (\ref{L1}) is fulfilled. Then we
observe that $L^{2}(\Omega _{k},\lambda )$ ($\lambda $ Lebesgue measure on $%
\mathbb{R}^{d+1}$ ) and $L^{2}(\Omega _{k},dt\times \nu )$ coincide and that
their corresponding norms are equivalent for each $k$. So the latter, the
definition of $\mathcal{E}$ in (\ref{SymmForm}) in connection with (\ref{p})
and compactness results for fractional spaces $W^{s,p}(\Omega )$ (see e.g. 
\cite{AF03} or \cite{NPV}) imply that the identity maps $i_{k}:H_{\mathcal{E}%
}(\Omega _{k})\longrightarrow L^{2}(\Omega _{k},dt\times \nu ),k\geq 1$ are
compact. Finally, we also see that condition (\ref{L2}) is an immediate
consequence of the definition of $\mathcal{E}$. Hence, it follows from Lemma %
\ref{Lewis} that $T_{\mathcal{E}}$ has a discrete spectrum and a compact
inverse $T_{\mathcal{E}}^{-1}$. Using this we find that the operator $A$ in (%
\ref{A2}) satisfies the assumptions of Theorem \ref{Theorem 3}.

\bigskip

In order to apply Theorem \ref{Theorem 3} to the construction of solutions
to the SDE (\ref{eq:main}) we need the following estimate with respect to
the operator $A$ in (\ref{A2}):

\begin{lemma}
\label{Lemma 13}Let $b\in C([0,T],C_{b}^{\infty }(\mathbb{R}^{d}))$.
Further, let $X_{\cdot }$ be the unique strong solution to (\ref{eq:main})
with respect to the drift coefficient $b$. Then for sufficiently small $%
T<\infty $ we have that%
\begin{equation*}
E[\left\Vert AD_{\cdot ,\cdot }X(\tau )\right\Vert _{L^{2}((0,\tau ))\otimes
L^{2}(\nu )}^{2}]\leq K\exp (TMH_{1}(\left\Vert b\right\Vert _{C_{b}^{\beta
}}^{2}))
\end{equation*}%
for all $0<\tau \leq T$, where $K,M<\infty $ are constants being independent
of $b$ and where $H_{1}$ is a non-negative continuous function given by 
\begin{equation*}
H_{1}(y):=\frac{(y+1)^{2}}{(1-C^{2}(T)y)},0\leq y<\frac{1}{C^{2}(T)}\text{.}
\end{equation*}%
for a constant $C(T)$ with $C(T)\longrightarrow 0$ as $T\searrow 0$.
\end{lemma}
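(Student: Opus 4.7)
The plan is to derive a Volterra-type integral equation for the Malliavin derivative $D_{t_0,z_0}X_\tau$, use Corollary \ref{Corollary 8} to recast the drift so that every estimate depends only on $\|b\|_{C_b^\beta}$ rather than on $\|b\|_{C_b^1}$, and then dominate $E[\mathcal{E}(D_\cdot X_\tau,D_\cdot X_\tau)]$ via the regularity of $u$ from Theorem \ref{Theorem 7} coupled with a Gronwall-type iteration. Since $L$ is pure-jump, the chaos-based Malliavin derivative of Section 2.3 acts as a discrete difference operator, $D_{t_0,z_0}X_\tau = X^{(t_0,z_0)}_\tau - X_\tau$, where $X^{(t_0,z_0)}$ denotes the solution to (\ref{eq:main}) perturbed by an extra jump of size $z_0$ at time $t_0$. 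Applying Corollary \ref{Corollary 8} with $\phi=-b$ rewrites the SDE in a form involving only $u$ and a compensated Poisson integral; differencing that identity yields
\[
(I+R_\tau)\,D_{t_0,z_0}X_\tau = z_0\mathbf{1}_{\{t_0\le\tau\}} + [u(t_0,X(t_0^-)+z_0)-u(t_0,X(t_0^-))]\mathbf{1}_{\{t_0\le\tau\}} + \int_{t_0}^\tau\!\int_{|z|<1}\!\Delta g(s,z)\,\widetilde N(ds,dz),
\]
where $g(s,y,z):=u(s,y+z)-u(s,y)$, $\Delta g(s,z):=g(s,X^{(t_0,z_0)}(s^-),z)-g(s,X(s^-),z)$, and $R_\tau$ is a mean-value factor satisfying $\|R_\tau\|_\infty\le\|\nabla u\|_\infty\le C(T)\|b\|_{C_b^\beta}$ by (\ref{K1}). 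For $T$ small enough, $(I+R_\tau)^{-1}$ is bounded by $2$.

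Using $\|AG\|^2_{L^2((0,\tau))\otimes L^2(\nu)}=\mathcal{E}(G,G)$, I would estimate the two components of $\mathcal{E}$ separately. For the weighted $L^2(p\,dz\,dt)$ piece, the source $z_0\mathbf{1}_{\{t_0\le\tau\}}$ contributes a finite universal constant because $\int_0^\tau\!\int p(t_0,z_0)|z_0|^2\,dz_0dt_0<\infty$, thanks to the choice $\alpha+\delta<2$ controlling the singularity of $p$ at $0$ and the integrability of $(1-|z|)^{-1/2}|z|^2$ at $|z|=1$. The $u$-increment source is bounded by $\|\nabla u\|_\infty|z_0|\le C(T)\|b\|_{C_b^\beta}|z_0|$ via (\ref{K1}), and the compensated stochastic integral is treated with the It\^o isometry together with
\[
|\Delta g(s,z)|\le\|D^2u\|_\infty\,|D_{t_0,z_0}X_{s^-}|\,|z|\le M\|b\|_{C_b^\beta}\,|D_{t_0,z_0}X_{s^-}|\,|z|,
\]
obtained from (\ref{K2}), combined with $\int_{|z|<1}|z|^2\nu(dz)<\infty$. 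Gronwall's lemma applied in $s$ then gives an exponential bound for $E[|D_{t_0,z_0}X_s|^2]$, and integration against $p(t_0,z_0)\,dz_0dt_0$ delivers the weighted $L^2$ part. For the non-local Slobodeckij piece, the differences $z_1-z_2$ and the corresponding $g$-differences at $(t_1,z_1),(t_2,z_2)$ are handled using the $C_b^\beta$-regularity of $\nabla u$ from (\ref{K1}) and the bound on $D^2u$ from (\ref{K2}); the resulting numerators are dominated by squares of quantities that are Lipschitz in $z$ and H\"older in $t$, against which the singular kernel $(|t_1-t_2|+|z_1-z_2|)^{-(d+1+2s)}$ integrates to a finite quantity precisely because $s<1/2$.

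The factor $(y+1)^2$ in $H_1$ collects the square of the two source contributions (the universal constant $1$ from the pure-jump increment and $\|b\|_{C_b^\beta}$ from $\|\nabla u\|_\infty$), while the denominator $1/(1-C^2(T)y)$ arises from iterating the prefactor $(I+R_\tau)^{-1}$ as a Neumann series whose geometric sum produces $\sum_n C^{2n}(T)\|b\|_{C_b^\beta}^{2n}$. The main obstacle will be the Slobodeckij semi-norm: one has to control the differences $D_{t_1,z_1}X_\tau-D_{t_2,z_2}X_\tau$ uniformly enough that integration against the singular kernel of $\mathcal{E}$ remains finite, and this is exactly where the full $C_b^\beta$-regularity (rather than mere boundedness) of $\nabla u$ furnished by (\ref{K1}) becomes indispensable; mere $L^\infty$-bounds on $u$ and $\nabla u$ would not suffice.
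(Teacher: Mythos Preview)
Your outline is essentially the paper's proof: rewrite the SDE via Corollary \ref{Corollary 8}, apply the Malliavin difference operator, use (\ref{K1})--(\ref{K2}) to control the resulting terms in $\|b\|_{C_b^\beta}$ only, and close with Gronwall on the two pieces of $\mathcal{E}$. One small correction: the time-regularity needed for the Slobodeckij source terms in the paper comes not from the H\"older seminorm of $\nabla u$ but from bounding $\partial_t u$ (via (\ref{IntEq}) and (\ref{IneqLevy})) together with the increment estimate $E[|X(l_1^-)-X(l_2^-)|^2]\le C|l_1-l_2|$, which yields a numerator linear in $|l_1-l_2|+|y_1-y_2|$ and hence integrable against the kernel since $s<\tfrac12$.
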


\begin{proof}
We know from Corollary \ref{Corollary 8} that we can rewrite the SDE (\ref%
{eq:main}) as%
\begin{eqnarray*}
X(t) &=&x+\int_{0}^{t}b(s,X(s))ds+L_{t}=u(0,x)-u(t,X(t)) \\
&&+\int_{0}^{t}\int_{\mathbb{R}^{d}}\{u(s,X(s^{-})+\gamma
(z))-u(s,X(s^{-}))\}\widetilde{N}(ds,dz) \\
&&+\int_{0}^{t}\int_{\mathbb{R}^{d}}\gamma (z)\widetilde{N}(ds,dz),
\end{eqnarray*}%
where $u\in C([0,T],C_{b}^{2}(\mathbb{R}^{d}))\cap C^{1}([0,T],C_{b}(\mathbb{%
R}^{d}))$ is the solution to the backward Kolmogorov equation (\ref%
{BackwardK}) in Theorem \ref{Theorem 7} and where%
\begin{equation*}
\gamma (z):=\mathbf{1}_{\{\left\vert z\right\vert \leq 1\}}z.
\end{equation*}%
So it follows from the properties of the Malliavin derivative $D$ associated
with our L\'{e}vy process (see e.g.\cite{NOP09}) that for all $0\leq l\leq t$
and $y$ 
\begin{eqnarray*}
D_{l,y}X(t) &=&u(t,X(t))-u(t,X(t)+D_{l,y}X(t)) \\
&&+\int_{0}^{t}\int_{\mathbb{R}^{d}}\{u(s,X(s^{-})+\gamma
(z)+D_{l,y}X(s^{-}))-u(s,X(s^{-})+\gamma (z)) \\
&&-(u(s,X(s^{-})+D_{l,y}X(s^{-}))-u(s,X(s^{-}))\}\widetilde{N}(ds,dz) \\
&&+u(l,X(l^{-})+\gamma (y))-u(l,X(l^{-}))+\gamma (y)
\end{eqnarray*}%
holds. Thus, using the mean value theorem we get%
\begin{eqnarray*}
&&D_{l_{1},y_{1}}X(t)-D_{l_{2},y_{2}}X(t) \\
&=&u(t,X(t)+D_{l_{2},y_{2}}X(t))-u(t,X(t)+D_{l_{1},y_{1}}X(t)) \\
&&+\int_{0}^{t}\int_{\mathbb{R}^{d}}\{u(s,X(s^{-})+\gamma
(z)+D_{l_{1},y_{1}}X(s^{-}))-u(s,X(s^{-})+D_{l_{1},y_{1}}X(s^{-})) \\
&&-u(s,X(s^{-})+\gamma
(z)+D_{l_{2},y_{2}}X(s^{-}))+u(s,X(s^{-})+D_{l_{2},y_{2}}X(s^{-}))\}%
\widetilde{N}(ds,dz) \\
&&+u(l_{1},X(l_{1}^{-})+\gamma (y_{1}))-u(l_{1},X(l_{1}^{-})) \\
&&-u(l_{2},X(l_{2}^{-})+\gamma (y_{2}))+u(l_{2},X(l_{2}^{-})) \\
&&+\gamma (y_{1})-\gamma (y_{2}) \\
&=&u(t,X(t)+D_{l_{2},y_{2}}X(t))-u(t,X(t)+D_{l_{1},y_{1}}X(t)) \\
&&+\int_{0}^{t}\int_{\mathbb{R}^{d}}\{%
\int_{0}^{1}(Du(s,X(s^{-})+D_{l_{1},y_{1}}X(s^{-})+\theta \gamma (z)) \\
&&-Du(s,X(s^{-})+D_{l_{2},y_{2}}X(s^{-})+\theta \gamma (z)))d\theta \gamma
(z)\}\widetilde{N}(ds,dz) \\
&&+u(l_{1},X(l_{1}^{-})+\gamma (y_{1}))-u(l_{1},X(l_{1}^{-})) \\
&&-u(l_{1},X(l_{2}^{-})+\gamma (y_{2}))+u(l_{2},X(l_{2}^{-})) \\
&&+\gamma (y_{1})-\gamma (y_{2}) \\
&=&\int_{0}^{1}Du(t,X(t)+\theta
(D_{l_{2},y_{2}}X(t)-D_{l_{1},y_{1}}X(t)))(D_{l_{2},y_{2}}X(t)-D_{l_{1},y_{1}}X(t))d\theta
\\
&&+\int_{0}^{t}\int_{\mathbb{R}^{d}}\{\int_{0}^{1}%
\int_{0}^{1}D^{2}u(s,X(s^{-})+\tau
(D_{l_{1},y_{1}}X(s^{-})-D_{l_{2},y_{2}}X(s^{-}))+\theta \gamma (z)) \\
&&[(D_{l_{1},y_{1}}X(s^{-})-D_{l_{2},y_{2}}X(s^{-})),\gamma (z)]d\tau
d\theta \}\widetilde{N}(ds,dz) \\
&&+u(l_{1},X(l_{1}^{-})+\gamma (y_{1}))-u(l_{1},X(l_{1}^{-})) \\
&&-u(l_{2},X(l_{2}^{-})+\gamma (y_{2}))+u(l_{2},X(l_{2}^{-})) \\
&&+\gamma (y_{1})-\gamma (y_{2}).
\end{eqnarray*}%
On the other hand, by repeated use of the mean value theorem we also have
that

\begin{eqnarray}
&&u(l_{1},X(l_{1}^{-})+\gamma (y_{1}))-u(l_{2},X(l_{2}^{-})+\gamma (y_{2})) 
\notag \\
&=&\int_{0}^{1}(\frac{\partial }{\partial t}u(l_{1}+\theta
(l_{1}-l_{2}),X(l_{1}^{-})+\gamma (y_{1})+\theta
(X(l_{1}^{-})-X(l_{2}^{-})+\gamma (y_{1})-\gamma (y_{2}))),  \notag \\
&&\frac{\partial }{\partial y}u(l_{1}+\theta
(l_{1}-l_{2}),X(l_{1}^{-})+\gamma (y_{1})+\theta
(X(l_{1}^{-})-X(l_{2}^{-})+\gamma (y_{1})-\gamma (y_{2}))))  \notag \\
&&\cdot ((l_{1}-l_{2}),(X(l_{1}^{-})-X(l_{2}^{-})+\gamma (y_{1})-\gamma
(y_{2}))^{T})d\theta .  \label{MV}
\end{eqnarray}%
Further, since%
\begin{equation*}
X(l_{1}^{-})-X(l_{2}^{-})=\int_{l_{1}}^{l_{2}}b(s,X(s))ds+%
\int_{l_{1}^{-}}^{l_{2}^{-}}\int_{\mathbb{R}^{d}}z\widetilde{N}(ds,dz)
\end{equation*}%
for $l_{1}\leq l_{2}$ It\^{o}'s isometry yields%
\begin{equation*}
E[\left\vert X(l_{1}^{-})-X(l_{2}^{-})\right\vert ^{2}]\leq C(\left\vert
l_{1}-l_{2}\right\vert ^{2}\left\Vert b\right\Vert _{\infty }^{2}+\left\vert
l_{1}-l_{2}\right\vert ).
\end{equation*}%
Using the latter, (\ref{MV}), the estimates (\ref{K1}), (\ref{K2}) in
Theorem \ref{Theorem 7} and (\ref{IneqLevy}) in connection with (\ref{IntEq}%
) we get that%
\begin{eqnarray*}
&&E[\left\vert u(l_{1},X(l_{1}^{-})+\gamma
(y_{1}))-u(l_{2},X(l_{2}^{-})+\gamma (y_{2}))\right\vert ^{2}] \\
&\leq &K\left\Vert b\right\Vert _{C_{b}^{\beta }}^{2}(\left\vert
l_{1}-l_{2}\right\vert ^{2}+\left\vert l_{1}-l_{2}\right\vert ^{2}\left\Vert
b\right\Vert _{\infty }^{2}+\left\vert l_{1}-l_{2}\right\vert +\left\vert
\gamma (y_{1})-\gamma (y_{2})\right\vert ^{2}) \\
&\leq &L\left\Vert b\right\Vert _{C_{b}^{\beta }}^{2}(1+\left\Vert
b\right\Vert _{C_{b}^{\beta }}^{2})(\left\vert l_{1}-l_{2}\right\vert
+\left\vert \gamma (y_{1})-\gamma (y_{2})\right\vert ).
\end{eqnarray*}%
In the same way we also obtain that%
\begin{equation*}
E[\left\vert u(l_{2},X(l_{2}^{-}))-u(l_{1},X(l_{1}^{-}))\right\vert
^{2}]\leq CL\left\Vert b\right\Vert _{C_{b}^{\beta }}^{2}(1+\left\Vert
b\right\Vert _{C_{b}^{\beta }}^{2})\left\vert l_{1}-l_{2}\right\vert .
\end{equation*}

By employing the It\^{o} isometry and once again the estimates (\ref{K1}), (%
\ref{K2}) for $T$ with $C(T)\left\Vert b\right\Vert _{C_{b}^{\beta }}\leq 
\frac{1}{2}$ in Theorem \ref{Theorem 7} we then find%
\begin{eqnarray*}
&&E[\left\vert D_{l_{1},y_{1}}X(t)-D_{l_{2},y_{2}}X(t)\right\vert
^{2}](1-C^{2}(T)\left\Vert b\right\Vert _{C_{b}^{\beta }}^{2}) \\
&\leq &K\{\left\Vert D^{2}u\right\Vert _{\infty }^{2}\int_{\mathbb{R}%
^{d}}\left\vert \gamma (z)\right\vert ^{2}\nu (dz)\int_{0}^{t}E[\left\vert
D_{l_{1},y_{1}}X(s^{-})-D_{l_{2},y_{2}}X(s^{-})\right\vert ^{2}]ds \\
&&+\left\Vert b\right\Vert _{C_{b}^{\beta }}^{2}(1+\left\Vert b\right\Vert
_{C_{b}^{\beta }}^{2})(\left\vert l_{1}-l_{2}\right\vert +\left\vert \gamma
(y_{1})-\gamma (y_{2})\right\vert )\}.
\end{eqnarray*}%
Hence%
\begin{eqnarray*}
&&E[\left\vert D_{l_{1},y_{1}}X(t)-D_{l_{2},y_{2}}X(t)\right\vert ^{2}] \\
&\leq &M\{\frac{\left\Vert b\right\Vert _{C_{b}^{\beta }}^{2}}{%
(1-C^{2}(T)\left\Vert b\right\Vert _{C_{b}^{\beta }}^{2})}%
\int_{0}^{t}E[\left\vert
D_{l_{1},y_{1}}X(s^{-})-D_{l_{2},y_{2}}X(s^{-})\right\vert ^{2}]ds \\
&&+\frac{\left\Vert b\right\Vert _{C_{b}^{\beta }}^{2}(\left\Vert
b\right\Vert _{C_{b}^{\beta }}^{2}+1)}{(1-C^{2}(T)\left\Vert b\right\Vert
_{C_{b}^{\beta }}^{2})}(\left\vert l_{1}-l_{2}\right\vert +\left\vert \gamma
(y_{1})-\gamma (y_{2})\right\vert )\} \\
&\leq &MH_{1}(\left\Vert b\right\Vert _{C_{b}^{\beta
}}^{2})\{\int_{0}^{t}E[\left\vert
D_{l_{1},y_{1}}X(s)-D_{l_{2},y_{2}}X(s)\right\vert ^{2}]ds \\
&&+(\left\vert l_{1}-l_{2}\right\vert +\left\vert \gamma (y_{1})-\gamma
(y_{2})\right\vert )\},
\end{eqnarray*}%
where%
\begin{equation*}
H_{1}(y):=\frac{(y+1)^{2}}{(1-C^{2}(T)y)},0\leq y<\frac{1}{C^{2}(T)}\text{.}
\end{equation*}%
Therefore we get%
\begin{eqnarray}
&&\int_{0}^{\tau }\int_{0}^{\tau }\int_{\left\vert y\right\vert
<1}\int_{\left\vert x\right\vert <1}\frac{E[\left\vert
D_{l_{1},y_{1}}X(t)-D_{l_{2},y_{2}}X(t)\right\vert ^{2}]}{(\left\vert
l_{1}-l_{2}\right\vert +\left\vert y_{1}-y_{2}\right\vert )^{d+1+2s}}%
dy_{1}dy_{2}dl_{1}dl_{2}  \notag \\
&\leq &MH_{1}(\left\Vert b\right\Vert _{C_{b}^{\beta
}}^{2})\{\int_{0}^{t}\int_{0}^{\tau }\int_{0}^{\tau }\int_{\left\vert
y\right\vert <1}\int_{\left\vert x\right\vert <1}\frac{E[\left\vert
D_{l_{1},y_{1}}X(s)-D_{l_{2},y_{2}}X(s)\right\vert ^{2}]}{(\left\vert
l_{1}-l_{2}\right\vert +\left\vert y_{1}-y_{2}\right\vert )^{d+1+2s}}%
dy_{1}dy_{2}dl_{1}dl_{2}ds  \notag \\
&&+\int_{0}^{\tau }\int_{0}^{\tau }\int_{\left\vert y\right\vert
<1}\int_{\left\vert x\right\vert <1}\frac{\left\vert l_{1}-l_{2}\right\vert
+\left\vert \gamma (y_{1})-\gamma (y_{2})\right\vert }{(\left\vert
l_{1}-l_{2}\right\vert +\left\vert y_{1}-y_{2}\right\vert )^{d+1+2s}}%
dy_{1}dy_{2}dl_{1}dl_{2}\}.  \label{E1}
\end{eqnarray}%
Similarly, we find%
\begin{eqnarray*}
&&E[\left\vert D_{l,y}X(t)\right\vert ^{2}] \\
&\leq &MH_{1}(\left\Vert b\right\Vert _{C_{b}^{\beta
}}^{2})\{\int_{0}^{t}E[\left\vert D_{l,y}X(s)\right\vert ^{2}]ds+\left\vert
\gamma (y)\right\vert ^{2}\}.
\end{eqnarray*}%
So 
\begin{eqnarray}
&&\int_{0}^{\tau }\int_{\left\vert y\right\vert <1}p(l,y)E[\left\vert
D_{l,y}X(t)\right\vert ^{2}]dydl  \notag \\
&\leq &MH_{1}(\left\Vert b\right\Vert _{C_{b}^{\beta
}}^{2})\{\int_{0}^{t}\int_{0}^{\tau }\int_{\left\vert y\right\vert
<1}p(l,y)E[\left\vert D_{l,y}X(s)\right\vert ^{2}]dydlds  \notag \\
&&+\int_{0}^{\tau }\int_{\left\vert y\right\vert <1}p(l,y)\left\vert \gamma
(y)\right\vert ^{2}]dydl\},  \label{E2}
\end{eqnarray}%
where the potential $p$ is defined as in (\ref{p}). By combining the
estimates (\ref{E1}) and (\ref{E2}) we obtain%
\begin{eqnarray*}
&&E[\left\Vert AD_{\cdot ,\cdot }X(t)\right\Vert _{L^{2}((0,\tau ))\otimes
L^{2}(\nu )}^{2}] \\
&\leq &MH_{1}(\left\Vert b\right\Vert _{C_{b}^{\beta
}}^{2})\{\int_{0}^{t}E[\left\Vert AD_{\cdot ,\cdot }X(s)\right\Vert
_{L^{2}((0,\tau ))\otimes L^{2}(\nu )}^{2}]ds \\
&&+K\},
\end{eqnarray*}%
where 
\begin{eqnarray*}
K &:&=\int_{0}^{\tau }\int_{0}^{\tau }\int_{\left\vert y\right\vert
<1}\int_{\left\vert x\right\vert <1}\frac{1}{(\left\vert
l_{1}-l_{2}\right\vert +\left\vert y_{1}-y_{2}\right\vert )^{d+2s}}%
dy_{1}dy_{2}dl_{1}dl_{2} \\
&&+\int_{0}^{\tau }\int_{\left\vert y\right\vert <1}p(l,y)\left\vert \gamma
(y)\right\vert ^{2}]dydl \\
&<&\infty ,
\end{eqnarray*}%
since $0<s<\frac{1}{2}.$ By Picard iteration one verifies that $E[\left\Vert
AD_{\cdot ,\cdot }X(t)\right\Vert _{L^{2}((0,\tau ))\otimes L^{2}(\nu
)}^{2}]<\infty .$

So we can apply Gronwall's Lemma and get%
\begin{equation*}
E[\left\Vert AD_{\cdot ,\cdot }X(t)\right\Vert _{L^{2}((0,\tau ))\otimes
L^{2}(\nu )}^{2}]\leq K\exp (TMH_{1}(\left\Vert b\right\Vert _{C_{b}^{\beta
}}^{2})).
\end{equation*}%
Thus the proof follows.
\end{proof}

\bigskip

We also want to employ the following result, whose proof can be found in 
\cite{Mitoma}:

\begin{theorem}
\label{DCompactness}Let $\Phi ^{\shortmid }$ be the topological dual of a
countably Hilbertian nuclear space $\Phi $. Further, consider the Skorohod
space $D([0,T],\Phi ^{\shortmid })$ of functions $f:[0,T]\longrightarrow
\Phi ^{\shortmid }$ with right-continuous paths and existing left limits.
Then a set $A\subset D([0,T],\Phi ^{\shortmid })$ is relatively compact if
and only if the set $\{f(\cdot )[\phi ]:f\in A\}$ is relatively compact in $%
D([0,T],\mathbb{R})$ for all $\phi \in \Phi $.
\end{theorem}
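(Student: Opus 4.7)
The plan is to establish the two directions of the equivalence separately, with the forward (necessity) direction being essentially formal and the reverse (sufficiency) direction carrying all the content. For necessity: fix $\phi\in\Phi$ and note that the evaluation $\ell_{\phi}:\Phi^{\shortmid}\to\mathbb{R}$, $u\mapsto u[\phi]$, is continuous by the very definition of the (strong) dual topology on $\Phi^{\shortmid}$. Post-composition with a cadlag path then yields a continuous map $D([0,T],\Phi^{\shortmid})\to D([0,T],\mathbb{R})$ in the Skorohod $J_{1}$-topology, since one and the same time change serves both sides. Continuous images of relatively compact sets are relatively compact, so the scalar projections inherit the property.

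For sufficiency, let $\{f_{n}\}\subset A$; the aim is to extract a subsequence converging in $D([0,T],\Phi^{\shortmid})$. Since $\Phi$ is countably Hilbertian (hence separable) I fix a countable dense subset $\{\phi_{k}\}_{k\geq 1}\subset\Phi$. By hypothesis each family $\{f_{n}(\cdot)[\phi_{k}]\}_{n}$ is relatively compact in $D([0,T],\mathbb{R})$, so a diagonal extraction produces a single subsequence (still denoted $f_{n}$) and cadlag limits $g_{k}\in D([0,T],\mathbb{R})$ with $f_{n}(\cdot)[\phi_{k}]\to g_{k}$ in Skorohod topology for every $k$.

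I would then exploit the nuclear structure. Writing $\Phi=\bigcap_{m}\Phi_{m}$ as a projective limit of Hilbert spaces, nuclearity provides, for every $m$, some $m^{\prime}>m$ for which the embedding $\Phi_{m^{\prime}}\hookrightarrow\Phi_{m}$ is Hilbert--Schmidt, so that bounded subsets of the dual $\Phi_{-m}$ are relatively compact in $\Phi_{-m^{\prime}}$. The scalar relative compactness gives $\sup_{n,t}\lvert f_{n}(t)[\phi_{k}]\rvert<\infty$ for every $k$, and Banach--Steinhaus upgrades this to $\sup_{n,t}\lVert f_{n}(t)\rVert_{\Phi_{-m_{0}}}<\infty$ in some dual Hilbert level. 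Setting $f(t)[\phi_{k}]:=g_{k}(t)$ and extending by density using the uniform bound then yields a candidate limit $f\in D([0,T],\Phi^{\shortmid})$ with values in $\Phi_{-m_{1}}$ for a suitable $m_{1}>m_{0}$.

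The main obstacle is reconciling the nonlinear time reparametrizations inherent in the $J_{1}$-topology with the linear nuclear structure: a priori the time changes $\lambda_{n}^{(k)}\in\Lambda_{T}$ witnessing $f_{n}[\phi_{k}]\to g_{k}$ depend on $k$, whereas convergence $f_{n}\to f$ in $D([0,T],\Phi^{\shortmid})$ demands a single sequence $\lambda_{n}$. The remedy, which is the technical core of Mitoma's argument, is a further extraction based on Billingsley's modulus $w_{\delta}^{\prime}$: relative compactness of $\{f_{n}[\phi_{k}]\}_{n}$ delivers uniform bounds on $w_{\delta}^{\prime}(f_{n}[\phi_{k}])$, and these can be merged so that a single $\lambda_{n}$ controls all $k$ simultaneously, using that the jump set of $f$ is at most countable and common to the $g_{k}$. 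Combined with the Hilbert--Schmidt equicontinuity from the previous step, this promotes the scalar convergences to Skorohod convergence of $f_{n}$ to $f$ in $\Phi^{\shortmid}$, completing the extraction.
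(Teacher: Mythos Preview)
The paper does not supply its own proof of this theorem; it merely states the result and refers the reader to Mitoma \cite{Mitoma} for the argument. Your sketch is essentially an outline of Mitoma's original proof: the forward implication via continuity of the evaluation map, and the delicate reverse implication combining a diagonal extraction over a countable dense set in $\Phi$, a Banach--Steinhaus/nuclearity step to localize into a single Hilbert level $\Phi_{-m}$, and the reconciliation of the $k$-dependent time changes through Billingsley's modulus $w'_\delta$. This is the correct architecture, and you have identified the genuine technical crux (unifying the reparametrizations) accurately; there is simply nothing in the present paper to compare it against beyond the citation.
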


\bigskip

Let us now consider a function $b\in C([0,T];C_{b}^{\beta }(\mathbb{R}^{d}))$%
. Then we know from the proof of Theorem \ref{Theorem 5} that there exists $%
b_{n}\in C([0,T],C_{b}^{\infty }(\mathbb{R}^{d})),n\geq 1$ such that%
\begin{equation}
\left\Vert b_{n}\right\Vert _{C_{b}^{\beta }}\leq \left\Vert b\right\Vert
_{C_{b}^{\beta }}  \label{Krylov1}
\end{equation}%
for all $n$. Further, we have that 
\begin{equation}
b_{n_{k}(t)}(t,\cdot )\longrightarrow b(t,\cdot )\text{ in }C^{\delta }(K)
\label{Krylov2}
\end{equation}%
for all $t$, any compact set $K\subset \mathbb{R}^{d}$ and $0<\delta <\beta $
for a subsequence $n_{k}(t),k\geq 1$ depending on $t$ and $K$. See also p.
37 in \cite{Krylov}.

\bigskip

\begin{lemma}
\label{Mitoma}Suppose that $X_{t}^{n},0\leq t\leq T,n\geq 1$ are the unique
strong solutions to (\ref{eq:main}) with respect to the drift coefficients $%
b_{n}$ in (\ref{Krylov1}). Then there exists a subsequence $(n_{k})_{k\geq
1} $ which only depends on (a sufficiently small) $T$ such that for all $%
0\leq t\leq T:$ $X_{t}^{n_{k}}$ converges in $L^{2}(\Omega )$ for $%
k\longrightarrow \infty .$
\end{lemma}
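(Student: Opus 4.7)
The plan is to apply the compactness criterion of Theorem \ref{Theorem 3} pointwise in $t$ to the family $\{X_t^n\}_{n\geq 1}$ and then combine this with a uniform $L^2$-equicontinuity estimate in $t$, via a diagonal argument over a countable dense subset of $[0,T]$, to extract a single subsequence that converges in $L^2(\Omega)$ at every $t$.

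First, since $b_n\in C([0,T],C_b^\infty(\mathbb{R}^d))$, each approximating SDE has a unique smooth strong solution $X^n$ and standard Malliavin-calculus arguments show $X_t^n\in \mathbb{D}^{1,2}$. The uniform bound $\|b_n\|_{C_b^\beta}\leq \|b\|_{C_b^\beta}$ from (\ref{Krylov1}) allows us to apply Lemma \ref{Lemma 13} to each $X^n$, yielding for every $\tau\in(0,T]$
\begin{equation*}
E\bigl[\|A D_{\cdot,\cdot} X^n(\tau)\|_{L^2((0,\tau))\otimes L^2(\nu)}^{2}\bigr]\leq K\exp\bigl(TMH_{1}(\|b\|_{C_b^\beta}^{2})\bigr),
\end{equation*}
with constants independent of $n$, provided $T$ is small enough for $H_1(\|b\|_{C_b^\beta}^2)$ to be finite. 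The elementary bound $E[|X_t^n|^2]\leq 3(|x|^2+T^2\|b\|_\infty^2+E[|L_T|^2])$ then shows that, for each fixed $t\in(0,T]$, the family $\{X_t^n\}_n$ lies in the set $\mathcal{G}$ of Theorem \ref{Theorem 3} with $C=A^{-1}$ and a constant $c$ that does not depend on $n$. The operator $A^{-1}$ is selfadjoint, compact with dense image (its spectrum is $\{\lambda_i^{-1/2}\}$ for the eigenvalues $\lambda_i\to\infty$ of $T_{\mathcal{E}}$ discussed after Lemma \ref{Lewis}), so Theorem \ref{Theorem 3} yields the relative compactness of $\{X_t^n\}_n$ in $L^2(\mu)$ for each fixed $t$.

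Now choose a countable dense subset $\{t_j\}_{j\geq 1}\subset [0,T]$. By a standard diagonal procedure applied to the pointwise relative compactness obtained above, one extracts a subsequence $(n_k)_{k\geq 1}$ such that $X_{t_j}^{n_k}$ converges in $L^2(\mu)$ for every $j$. To pass from this dense-in-$t$ convergence to convergence at \emph{every} $t\in[0,T]$, one uses the SDE representation together with It\^o's isometry for the jump part of $L$ to obtain a uniform-in-$n$ estimate of the form
\begin{equation*}
\|X_t^n-X_s^n\|_{L^2(\mu)}^{2}\leq C\bigl(|t-s|^{2}\|b\|_\infty^{2}+|t-s|\bigr).
\end{equation*}
The standard three-term triangle inequality argument (splitting $X_t^{n_k}-X_t^{n_l}$ through a nearby $t_j$) then shows that $(X_t^{n_k})_k$ is Cauchy in $L^2(\mu)$ at every $t\in[0,T]$, hence convergent.

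The main technical point is the compatibility of the functional-analytic setting of Theorem \ref{Theorem 3} (where $H=L^2([0,1])$) with the scale of operators $A=T_{\mathcal{E}}^{1/2}$ used in Lemma \ref{Lemma 13}, which is defined on $L^2((0,\tau))\otimes L^2(\nu)$; this is handled by rescaling time (or equivalently observing that Theorem \ref{Theorem 3} goes through verbatim with $H$ replaced by any separable Hilbert space). The other delicate ingredient is the restriction on $T$ hidden in $H_1$: the bound of Lemma \ref{Lemma 13} only holds as long as $C^2(T)\|b\|_{C_b^\beta}^2<1$. For larger terminal times one iterates the construction on successive small subintervals of $[0,T]$, concatenating and rediagonalizing the subsequences at each step, which explains the qualifier "sufficiently small $T$" in the statement.
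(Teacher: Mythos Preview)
Your argument is correct, but it follows a genuinely different route from the paper's. Both proofs use Lemma~\ref{Lemma 13} together with Theorem~\ref{Theorem 3} to obtain, for each fixed $t$, relative compactness of $\{X_t^n\}_n$ in $L^2(\mu)$; the divergence is in how a \emph{single} subsequence, independent of $t$, is produced. You do this by a classical diagonal extraction over a countable dense set $\{t_j\}\subset[0,T]$, followed by the uniform $L^2$-equicontinuity estimate $\|X_t^n-X_s^n\|_{L^2}^2\leq C(|t-s|^2\|b\|_\infty^2+|t-s|)$ and a three-term triangle inequality. The paper instead works in the L\'evy--Hida distribution space $(\mathcal{S})^{\ast}$: it shows that $t\mapsto\langle X_t^{n,i},f\rangle$ has uniformly controlled Skorohod modulus for each test functional $f\in(\mathcal{S})$, invokes Mitoma's compactness criterion (Theorem~\ref{DCompactness}) to extract one subsequence converging in $D([0,T];(\mathcal{S})^{\ast})$, and then upgrades this weak convergence to $L^2$-convergence at every $t$ via a contradiction argument using the pointwise $L^2$-compactness from Theorem~\ref{Theorem 3}. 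Your approach is more elementary---it avoids nuclear-space duals and infinite-dimensional Skorohod compactness entirely---and arguably cleaner for the present purpose. The paper's route, on the other hand, delivers as a byproduct convergence of the full paths in $D([0,T];(\mathcal{S})^{\ast})$, and its use of Mitoma's theorem is consistent with the machinery set up earlier in the section (which is presumably why the lemma carries the label \texttt{Mitoma}). Your closing remark about iterating on subintervals is unnecessary here, since the lemma is stated only for sufficiently small~$T$.
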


\begin{proof}
We know that%
\begin{equation*}
X_{t}^{n}=x+\int_{0}^{t}b_{n}(s,X_{s})ds+L_{t},0\leq t\leq T.
\end{equation*}%
Let $\delta >0$ and consider a finite partition 
\begin{equation}
0=t_{0}<t_{1}<....<t_{n}=T  \label{Part}
\end{equation}
with $t_{j}-t_{j-1}\geq \delta $ for all $j=1,...,n.$

Then we have%
\begin{eqnarray}
&&X_{t_{j}}^{n}-X_{t_{j-1}}^{n}  \notag \\
&=&\int_{t_{j-1}}^{t_{j}}b_{n}(s,X_{s}^{n})ds+L_{t_{j}}-L_{t_{j-1}}
\label{i}
\end{eqnarray}%
for all $j=1,...,n..$

Now let $f$ be an element of the L\'{e}vy-Hida test function space $(%
\mathcal{S})\subset L^{2}(\Omega ).$ Denote by $(\mathcal{S})^{\ast }$ its
topological dual (L\'{e}vy-Hida distribution space). See e.g. \cite{NOP09}
and the references therein for further information on these spaces. Then $%
\left\langle (X_{t_{1}}^{n,i}-X_{t_{2}}^{n,i}),f\right\rangle _{(\mathcal{S}%
)^{\ast },(\mathcal{S})}=E[(X_{t_{1}}^{n,i}-X_{t_{2}}^{n,i})f],$ where $%
\left\langle \cdot ,\cdot \right\rangle _{(\mathcal{S})^{\ast },(\mathcal{S}%
)}$ is the dual pairing. So using (\ref{i}) we get%
\begin{eqnarray*}
&&E[(X_{t_{1}}^{n,i}-X_{t_{2}}^{n,i})f] \\
&=&%
\int_{t_{j-1}}^{t_{j}}E[b_{n}^{(i)}(s,X_{s}^{n})f]ds+E[(L_{t_{j}}-L_{t_{j-1}})f]
\end{eqnarray*}%
for all $j.$ Thus we it follows form H\"{o}lder's inequality and It\^{o}'s
isometry that%
\begin{equation*}
\left\vert E[(X_{t_{1}}^{n,i}-X_{t_{2}}^{n,i})f]\right\vert \leq C\left\vert
t_{j}-t_{j-1}\right\vert (E[f^{2}])^{1/2}
\end{equation*}%
for all $i$ and $j$ and a constant $C$ depending on $\left\Vert b\right\Vert
_{C_{b}^{\beta }}$ and the L\'{e}vy measure $\nu $. So 
\begin{equation*}
\sup_{n\geq 1}\omega ^{T}(\left\langle (X_{\cdot }^{n,i},f\right\rangle _{(%
\mathcal{S})^{\ast },(\mathcal{S})},\delta )\longrightarrow 0\text{ for }%
\delta \searrow 0,
\end{equation*}%
where $\omega ^{T}$ is the modulus given by%
\begin{equation*}
\omega ^{T}(g,\delta ):=\inf_{\{t_{j}\}}\max_{1\leq j\leq n}\sup
\{\left\vert g(t)-g(s)\right\vert :s,t\in \lbrack t_{j-1},t_{j})\},
\end{equation*}%
where the infimum is taken over partitions $\{t_{j}\}$ of the form (\cite%
{Xiong}).

So $\left\langle (X_{\cdot }^{n,i},f\right\rangle _{(\mathcal{S})^{\ast },(%
\mathcal{S})}$ is relatively compact in $D([0,T],\mathbb{R})$ for all $f\in (%
\mathcal{S})$. Since $(\mathcal{S})^{\ast }$ is the dual of a countably
Hilbertian nuclear space $(\mathcal{S})$, we can apply Theorem \ref%
{DCompactness} and find that there exists for all $i$ a subsequence $%
(n_{k}^{i})_{k\geq 1}$ which only depends on (a sufficiently small) $T$ such
that $X_{\cdot }^{n_{k}^{i},i}$ converges in $D([0,T];(\mathcal{S})^{\ast
}). $

On the other hand it follows from Lemma \ref{Lemma 13} and (\ref{Krylov1})
that for sufficiently small $T<\infty $ we have%
\begin{equation*}
E[\left\Vert AD_{\cdot ,\cdot }X^{n}(\tau )\right\Vert _{L^{2}((0,\tau
))\otimes L^{2}(\nu )}^{2}]\leq K\exp (TMH_{1}(\left\Vert b\right\Vert
_{C_{b}^{\beta }}^{2}))
\end{equation*}%
for all $0<\tau \leq T$, where $K,M<\infty $ are constants being independent
of $b$ and where $H_{1}$ is a non-negative continuous function on some
interval $[0,M]$ with $0\leq \left\Vert b\right\Vert _{C_{b}^{\beta }}^{2}<M$%
.

Then, applying Theorem \ref{Theorem 3} to the sequence $X_{t}^{n_{k}^{i},i}$
we find that for all $t$ and $i$ there exists a subsequence $%
m_{l}=m_{l}^{t,i},l\geq 1$ of $n_{k}^{i},k\geq 1$ and a $\widetilde{X}%
_{t}^{i}\in L^{2}(\Omega )$ such that%
\begin{equation}
X_{t}^{n_{m_{l}}^{i},i}\longrightarrow \widetilde{X}_{t}^{i}\text{ for }%
l\longrightarrow \infty  \label{Sub}
\end{equation}%
in $L^{2}(\Omega ).$

Let us show that 
\begin{equation*}
X_{t}^{n_{k}^{i},i}\longrightarrow \widetilde{X}_{t}^{i}\text{ for }%
k\longrightarrow \infty \text{ in }L^{2}(\Omega )
\end{equation*}%
for all $t,i.$ To this end we argue by contradiction. Assume that there
exists for some $t,i$ a $\varepsilon >0$ and a subsequence $\varphi
_{l},l\geq 1$ such that%
\begin{equation*}
\left\Vert X_{t}^{n_{\varphi _{l}}^{i},i}-\widetilde{X}_{t}^{i}\right\Vert
_{L^{2}(\Omega )}\geq \varepsilon .
\end{equation*}%
On the other hand we know by Theorem \ref{Theorem 3} that there exists a
subsequence $\phi _{r},r\geq 1$ of such that 
\begin{equation*}
X_{t}^{n_{\varphi _{\phi _{r}}}^{i},i}\longrightarrow \widetilde{Y}_{t}^{i}%
\text{ for }r\longrightarrow \infty \text{ in }L^{2}(\Omega ).
\end{equation*}%
But since%
\begin{equation*}
X_{t}^{n_{k}^{i},i}\longrightarrow \widetilde{X}_{t}^{i}\text{ for }%
k\longrightarrow \infty \text{ in }(\mathcal{S})^{\ast }
\end{equation*}%
because of (\ref{Sub}), we see that%
\begin{equation*}
\widetilde{Y}_{t}^{i}=\widetilde{X}_{t}^{i}.
\end{equation*}%
But this leads to the contradiction 
\begin{equation*}
\left\Vert X_{t}^{n_{\varphi _{\phi _{r}}}^{i},i}-\widetilde{X}%
_{t}^{i}\right\Vert _{L^{2}(\Omega )}\geq \varepsilon .
\end{equation*}%
So the proof follows.
\end{proof}

\bigskip

We are coming to the main result of our paper on SDE's with time-homogeneous
drift coefficients:

\begin{theorem}
\bigskip \label{MainResult}\bigskip Suppose that $L_{t},0\leq t\leq T$ is a $%
d$-dimensional truncated $\alpha -$stable process for $\alpha \in (1,2)$ and 
$d\geq 2$. Require that $b\in C_{b}^{\beta }(\mathbb{R}^{d})$ for $\beta \in
(0,1)$ such that $\alpha +\beta >2$. Then there exists for sufficiently
small $T>0$ a unique strong solution $X_{\cdot }$ to the SDE%
\begin{equation}
dX_{t}=b(X_{t})dt+dL_{t},0\leq t\leq T,X_{0}=x.  \label{SDE2}
\end{equation}%
Moreover, $X_{t}$ is Malliavin differentiable for all $0\leq t\leq T$.
\end{theorem}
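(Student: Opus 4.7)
The plan is to chain together the three main ingredients developed above: the smoothing procedure, the compactness criterion, and the Kolmogorov reformulation. First, identify the time-homogeneous $b$ with the constant-in-$t$ element of $C([0,T];C_b^\beta(\mathbb{R}^d))$ and mollify it as in the final part of the proof of Theorem \ref{Theorem 5} to obtain $b_n \in C([0,T];C_b^\infty(\mathbb{R}^d))$ with $\|b_n\|_{C_b^\beta}\le \|b\|_{C_b^\beta}$ and with $b_{n_k(t)}\to b$ locally in $C^\delta$ for every $0<\delta<\beta$. Each $b_n$ is globally Lipschitz, so Picard iteration produces a unique strong solution $X^n$ of the approximating SDE. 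Applying Lemma \ref{Mitoma} with these $b_n$ yields a subsequence, again denoted $(n_k)_k$ and depending only on a sufficiently small horizon $T$, along which $X^{n_k}_t\to X_t$ in $L^2(\Omega)$ for every fixed $t\in[0,T]$. Shrinking $T$ if necessary so that $C(T)\|b\|_{C_b^\beta}\le 1/2$ in Theorem \ref{Theorem 7} ensures that the hypotheses of Lemma \ref{Lemma 13} (uniform in $n$) and of the Kolmogorov construction are simultaneously available. By construction, $X_t$ inherits adaptedness to the $\mu$-completed filtration generated by $L$ from the approximating sequence.

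Next, verify that $X$ solves \eqref{SDE2}. Rather than passing to the limit directly in $\int_0^t b_{n_k}(X^{n_k}_s)\,ds$, apply Corollary \ref{Corollary 8} to rewrite
\begin{equation*}
\int_0^t b_{n_k}(X^{n_k}_s)\,ds = u_{n_k}(0,x)-u_{n_k}(t,X^{n_k}_t)+\int_0^t\!\!\int_{\mathbb{R}^d}\{u_{n_k}(s,X^{n_k}_{s^-}+\gamma(z))-u_{n_k}(s,X^{n_k}_{s^-})\}\widetilde N(ds,dz),
\end{equation*}
where $u_{n_k}$ solves the backward Kolmogorov equation \eqref{BackwardK} with data $\varphi=-b_{n_k}$. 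The Picard scheme in the proof of Theorem \ref{Theorem 7} is contractive in $b$ in a neighbourhood of the true coefficient, so $u_{n_k}\to u$ uniformly together with its first and second spatial derivatives, where $u$ corresponds to $-b$. Combining the uniform bound \eqref{K2} on $D^2 u_{n_k}$, the $L^2$-convergence of $X^{n_k}_t$, the mean-value inequality \eqref{IneqLevy}, and the It\^o isometry for the compensated Poisson integral, every term on the right-hand side converges in $L^2(\mu)$ to the corresponding expression for $u$ and $X$. Applying It\^o's formula to $u(t,X_t)$, as in the proof of Corollary \ref{Corollary 8}, one recognises the limiting right-hand side as $\int_0^t b(X_s)\,ds$, so $X$ solves \eqref{SDE2}.

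For Malliavin differentiability, invoke Lemma \ref{Lemma 13} together with \eqref{Krylov1} to conclude that $\{AD_{\cdot,\cdot}X^{n_k}_t\}_k$ is bounded in $L^2(\Omega;H\otimes L^2(\nu))$. Extracting a further weakly convergent subsequence and using the closability of the Malliavin derivative on $L^2(\mu)$ forces the weak limit to equal $AD_{\cdot,\cdot}X_t$, so $X_t\in\mathbb{D}^{1,2}$ and the asserted Malliavin differentiability follows. Uniqueness is then obtained by feeding an arbitrary strong solution $\widetilde X$ into the same representation of Corollary \ref{Corollary 8} (with the same $u$), subtracting, taking $L^2(\mu)$-norms and using \eqref{K1}, \eqref{K2} together with $C(T)\|b\|_{C_b^\beta}\le 1/2$ to absorb the gradient term into the left-hand side, and finishing with Gronwall's lemma to obtain $E[|X_t-\widetilde X_t|^2]=0$; c\`adl\`ag paths promote this to indistinguishability.

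The main obstacle is the passage to the limit in the nonlinear drift: a direct attempt fails because $b_{n_k}\to b$ only locally in $C^\delta$ and $X^{n_k}_s\to X_s$ only in $L^2(\Omega)$, which is not enough to control $b_{n_k}(X^{n_k}_s)-b(X_s)$ in a useful norm. The Kolmogorov reformulation of Corollary \ref{Corollary 8} is precisely what removes this obstruction, trading the singular drift integral for expressions involving $u$ and its first two spatial derivatives, which are controlled uniformly thanks to \eqref{K1} and \eqref{K2} and the sharp condition $\alpha+\beta>2$.
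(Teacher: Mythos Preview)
Your uniqueness argument and the Malliavin differentiability step match the paper's proof essentially line for line (the paper phrases the latter as an appeal to Lemma~1.2.3 in \cite{Nua06}, which is precisely the closability statement you invoke).

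The existence argument, however, diverges from the paper in a notable way, and your justification for the detour is mistaken. You assert that a direct passage to the limit in $\int_0^t b_{n_k}(X^{n_k}_s)\,ds$ fails because the available convergences are ``not enough to control $b_{n_k}(X^{n_k}_s)-b(X_s)$ in a useful norm.'' The paper shows that they \emph{are} enough. One first extracts, via a diagonal argument, a subsequence along which $b_{n_k^\ast}(y)\to b(y)$ for every $y$, and then splits
\[
b_{n_k^\ast}(X^{n_k}_s)-b(X_s)=\bigl(b_{n_k^\ast}(X^{n_k}_s)-b_{n_k^\ast}(X_s)\bigr)+\bigl(b_{n_k^\ast}(X_s)-b(X_s)\bigr).
\]
For the first bracket the uniform bound $\|b_{n_k^\ast}\|_{C_b^\beta}\le\|b\|_{C_b^\beta}$ gives $|b_{n_k^\ast}(X^{n_k}_s)-b_{n_k^\ast}(X_s)|\le \|b\|_{C_b^\beta}\,|X^{n_k}_s-X_s|^\beta$, and since $\beta<1$ Jensen's inequality yields $E[|X^{n_k}_s-X_s|^{2\beta}]\le (E[|X^{n_k}_s-X_s|^2])^{\beta}\to 0$. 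For the second bracket, pointwise convergence of $b_{n_k^\ast}$ together with the uniform bound $\|b_{n_k^\ast}\|_\infty\le\|b\|_\infty$ gives the conclusion by dominated convergence. This is considerably shorter than your route through Corollary~\ref{Corollary 8}.

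Your alternative route is not wrong in spirit, but it introduces a new obligation you do not fully discharge: the claim that $u_{n_k}\to u$ in $C([0,T];C_b^2(\mathbb{R}^d))$. Saying ``the Picard scheme in the proof of Theorem~\ref{Theorem 7} is contractive in $b$'' is not the same as establishing continuous dependence of the fixed point on the data $(b,\varphi)=(b_{n_k},-b_{n_k})$; one must feed the difference $u_{n_k}-u$ back through the estimates \eqref{K1}--\eqref{K2} and control the cross term $b_{n_k}\cdot\nabla u_{n_k}-b\cdot\nabla u$, which in turn requires $b_{n_k}\to b$ in $C_b^\beta$ rather than merely locally in $C^\delta$. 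This can be repaired, but the paper's direct argument sidesteps the issue entirely by exploiting the uniform H\"older bound on the mollifiers.
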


\begin{proof}
\textbf{1. }\emph{Existence}: By (\ref{Krylov2} ) (see also the proof of
Theorem \ref{Theorem 5}) we find a subsequence $n_{k}^{\ast },k\geq 1$ such
that%
\begin{equation*}
b_{n_{k}^{\ast }}(y)\longrightarrow b(y)\text{ as }k\longrightarrow \infty
\end{equation*}%
for all $y$. Consider now the sequence of unique strong solutions $X_{\cdot
}^{k}$ to 
\begin{equation}
X_{t}^{k}=x+\int_{0}^{t}b_{n_{k}^{\ast }}(X_{s}^{k})ds+L_{t}  \label{SDE3}
\end{equation}%
with respect to the drift coefficients $b_{n_{k}^{\ast }},k\geq 1$ in (\ref%
{Krylov1}). Then we know from Lemma \ref{Mitoma} that there exists a
subsequence $(n_{k})_{k\geq 1}$ which only depends on (a sufficiently small) 
$T$ such that for all $0\leq t\leq T:$%
\begin{equation*}
X_{t}^{n_{k}}\longrightarrow X_{t}\text{ in }L^{2}(\Omega )
\end{equation*}%
for $k\longrightarrow \infty .$ On the other hand we obtain by dominated
convergence that$_{{}}$%
\begin{eqnarray*}
&&E[(\int_{0}^{t}b_{n_{n_{k}}^{\ast
}}(X_{s}^{n_{k}})ds-\int_{0}^{t}b(X_{s})ds)^{2}] \\
&=&E[(\int_{0}^{t}(b_{n_{n_{k}}^{\ast }}(X_{s}^{n_{k}})-b_{n_{n_{k}}^{\ast
}}(X_{s})+b_{n_{n_{k}}^{\ast }}(X_{s})-b(X_{s}))ds)^{2}] \\
&\leq &C(E[\int_{0}^{t}(b_{n_{n_{k}}^{\ast
}}(X_{s}^{n_{k}})-b_{n_{n_{k}}^{\ast
}}(X_{s}))^{2}ds]+E[\int_{0}^{t}(b_{n_{n_{k}}^{\ast
}}(X_{s})-b(X_{s}))^{2}ds]) \\
&\leq &C\left\Vert b\right\Vert _{C_{b}^{\beta
}}^{2}(E[\int_{0}^{t}\left\vert X_{s}^{n_{k}}-X_{s}\right\vert ^{2\beta
}ds]+E[\int_{0}^{t}(b_{n_{n_{k}}^{\ast }}(X_{s})-b(X_{s}))^{2}ds]) \\
&\leq &C\left\Vert b\right\Vert _{C_{b}^{\beta
}}^{2}(\int_{0}^{t}(E[\int_{0}^{t}\left\vert X_{s}^{n_{k}}-X_{s}\right\vert
^{2}])^{\beta }ds+E[\int_{0}^{t}(b_{n_{n_{k}}^{\ast
}}(X_{s})-b(X_{s}))^{2}ds]) \\
&\longrightarrow &0\text{ as }k\longrightarrow \infty .
\end{eqnarray*}%
So by passing to the limit in $L^{2}(\Omega )$ on both sides of (\ref{SDE3})
we get%
\begin{equation*}
X_{t}=x+\int_{0}^{t}b(X_{s})ds+L_{t},0\leq t\leq T\text{.}
\end{equation*}

\textbf{2. }\emph{Uniqueness}: Suppose that there are two solutions $%
X_{\cdot }^{1}$ and $X_{\cdot }^{2}$ to (\ref{SDE3}). Then it follows from
Corollary \ref{Corollary 8} and the mean value theorem that%
\begin{eqnarray*}
&&X^{1}(t)-X^{2}(t) \\
&=&\int_{0}^{t}(b(X^{1}(s))-b(X^{2}(s)))ds=u(t,X^{2}(t))-u(t,X^{1}(t)) \\
&&+\int_{0}^{t}\int_{\mathbb{R}^{d}}\{u(s,X^{1}(s^{-})+\gamma
(z))-u(s,X^{1}(s^{-})) \\
&&-u(s,X^{2}(s^{-})+\gamma (z))+u(s,X^{2}(s^{-}))\}\widetilde{N}(ds,dz) \\
&=&u(t,X^{2}(t))-u(t,X^{1}(t)) \\
&&+\int_{0}^{t}\int_{\mathbb{R}^{d}}\int_{0}^{1}\int_{0}^{1}D^{2}u(s,\theta
\gamma (z)+\tau (X^{1}(s^{-})-X^{2}(s^{-}))) \\
&&\lbrack (X^{1}(s^{-})-X^{2}(s^{-})),\gamma (z)]d\theta d\tau \widetilde{N}%
(ds,dz).
\end{eqnarray*}%
Using the It\^{o} isometry and the estimates (\ref{K1}), (\ref{K2}) we obtain%
\begin{eqnarray*}
&&E[\left\vert X^{1}(t)-X^{2}(t)\right\vert ^{2}] \\
&\leq &K\frac{\left\Vert b\right\Vert _{C_{b}^{\beta }}^{2}}{%
(1-C^{2}(T)\left\Vert b\right\Vert _{C_{b}^{\beta }}^{2})}%
\int_{0}^{t}E[\left\vert X^{1}(s)-X^{2}(s)\right\vert ^{2}]ds.
\end{eqnarray*}%
Hence Gronwall's Lemma gives%
\begin{equation*}
X_{\cdot }^{1}=X_{\cdot }^{2}.
\end{equation*}

The Malliavin differentiability of $X_{t}$ is a consequence of the fact (see
Lemma \ref{Lemma 13}) that%
\begin{eqnarray*}
&&E[\left\Vert D_{\cdot ,\cdot }X^{n_{k}}(\tau )\right\Vert _{L^{2}((0,\tau
))\otimes L^{2}(\nu )}^{2}] \\
&\leq &CE[\left\Vert AD_{\cdot ,\cdot }X^{n_{k}}(\tau )\right\Vert
_{L^{2}((0,\tau ))\otimes L^{2}(\nu )}^{2}] \\
&\leq &K\exp (TMH_{1}(\left\Vert b\right\Vert _{C_{b}^{\beta }}^{2})), \\
k &\geq &1,0<\tau \leq T
\end{eqnarray*}
and Lemma 1.2.3 in \cite{Nua06}.
\end{proof}

\bigskip

\begin{remark}
The proof of Theorem \ref{MainResult} and the preceding results which are
formulated with respect to time-inhomogeneous coefficients $b$ show that we
may choose in Theorem \ref{MainResult} drift coefficients of the form%
\begin{equation*}
b(t,x)=\sum_{i=1}^{m}f_{i}(t)b_{i}(x),
\end{equation*}%
where $f_{i},i=1,...,m$ are continuous functions and $b_{i}\in C_{b}^{\beta
}(\mathbb{R}^{d}),i=1,...,m.$
\end{remark}

\bigskip

\end{document}